\newcommand\Sym{S}
\newcommand\hlg{{\Lie(\hG)}}
\renewcommand\lg{{\Lie(G)}}
\newcommand\inv{^{-1}}
\newcommand\Proj{{\rm Proj}}
\newcommand\face{{\mathcal F}}
\newcommand\mcL{{\mathcal L}}
\newcommand\revddots{\mathinner{\mkern1mu\raise\p@\vbox{\kern7\p@\hbox{.}}\mkern2mu\raise4\p@\hbox{.}\mkern2mu\raise7\p@\hbox{.}\mkern1mu}}
\newenvironment{proof}{{\noindent\bf Proof.}}{\hfill $\square$}
\newenvironment{NB}{{\noindent\bf Remark.}}{}
\newcommand\LR{\operatorname{LR}}
\newcommand\mode{\operatorname{mod}}
\newcommand\lr{{\QQ_{\geq 0}\LR}}
\newcommand\longto{\longrightarrow}
\newcommand\Part{{\mathcal S}}
\newcommand\Kron{\operatorname{Kron}}
\newcommand\PP{{\mathbb P}}
\newcommand\QQ{{\mathbb Q}}\newcommand\ZZ{{\mathbb
    Z}}\newcommand\NN{{{\mathbb Z}_{\geq 0}}}
\newcommand\CC{{\mathbb C}}
\newcommand\Fl{{\mathcal Fl}}\newcommand\Face{{\mathcal F}}
\newcommand\Gr{{\mathbb G}}
\newcommand\Pic{\operatorname{Pic}}
\newcommand\SL{\operatorname{SL}}
\newcommand\Ho{\operatorname{H}}
\newcommand\Li{{\mathcal{L}}}
\newcommand\Mi{{\mathcal{M}}}
\newcommand\quot{/\hspace{-.5ex}/}
\newcommand\Lie{{\operatorname{Lie}}}
\newcommand\GL{\operatorname{GL}}
\newcommand\Wt{\operatorname{Wt}}
\newcommand\hnu{{\hat\nu}}
\newcommand\hG{{\hat G}} \newcommand\hB{{\hat B}}\newcommand\hT{{\hat T}} 
\newcommand\hW{{\hat W}}\newcommand\hw{{\hat w}}
\newcommand\hP{{\hat P}}
\newtheorem{lemma}{Lemma}
\newtheorem{prop}{Proposition}
\newtheorem{theo}{Theorem}
\renewcommand\binom[1]{
 \left (\begin{array}{@{}c@{}}
 #1\\
 2
 \end{array}	
 \right )}
\begin{document}
\title{Horn inequalities for nonzero Kronecker coefficients}
\author{N. Ressayre}
%\email{ressayre@math.univ-lyon1.fr}

% \keywords{Horn problem, Eigencone}
% \classification{20G05,20G20}

\maketitle
\begin{abstract}
The Kronecker coefficients $g_{\alpha\beta\gamma}$ and the Littlewood-Richardson coefficients $c_{\alpha\beta}^\gamma$ are nonnegative integers depending on three partitions $\alpha$, $\beta$, and $\gamma$. By definition,  $g_{\alpha\beta\gamma}$  (resp. $c_{\alpha\beta}^\gamma$) are the  multiplicities of the tensor product decomposition of two irreducible representations of symmetric groups (resp. linear groups). By a classical Littlewood-Murnaghan's result the Kronecker coefficients extend the Littlewood-Richardson ones.

The nonvanishing of the Littlewood-Richardson coefficient
$c_{\alpha\beta}^\gamma$ implies that $(\alpha, \beta, \gamma)$
satisfies some linear inequalities called Horn inequalities. In this
paper, we extend the essential Horn inequalities to 
the triples of partitions corresponding to a nonzero Kronecker
coefficient.

Along the way, we describe the set of tripless $(\alpha,\beta,\gamma)$
of partitions such that $c_{\alpha\beta}^\gamma\neq 0$ and
$l(\alpha)\leq e$, $l(\beta)\leq f$ and $l(\gamma)\leq e+f$, for some
given positive integers $e$ and $f$. 
This set is the natural analogue of the classical Horn semigroup when
one thinks about $c_{\alpha\beta}^\gamma$ as the branching
multiplicities for the subgroup $\GL_e\times\GL_f$ of $\GL_{e+f}$.
\end{abstract}

\section{Introduction}

If $\alpha=(\alpha_1\geq\alpha_2\geq\cdots\geq\alpha_e\geq 0)$ is a partition, we
set $|\alpha|=\sum_i\alpha_i$ in such a way $\alpha$ is a partition
of $|\alpha|$.
Consider the symmetric group $\Sym_n$  on $n$ letters.
The irreducible representations of $\Sym_n$ are parametrized by the
partitions of $n$, see {\it e.g.} \cite[I. 7]{Macdo} . Let $[\alpha]$ denote the representation
of $\Sym_{|\alpha|}$ corresponding to $\alpha$.
The Kronecker coefficients $g_{\alpha\,\beta\,\gamma}$, depending on
three partitions $\alpha,\,\beta$, and $\gamma$ of the same integer $n$, are defined by

\begin{eqnarray}
  \label{eq:defk}  [\alpha]\otimes [\beta]=\sum_{\gamma}g_{\alpha\,\beta\,\gamma}[\gamma].
\end{eqnarray}

The length $l(\alpha)$ of the partition $\alpha$ is the number of
nonzero parts $\alpha_i$.
Let $V$ be  a complex vector space of dimension $d$.
If  $l(\alpha)\leq d$ then $S^\alpha V$ denotes 
the Schur power (see {\it e.g.} \cite{FH}): it is an
irreducible polynomial representation of the linear group $\GL(V)$.
Let  $\beta$ be a second  partition such that $l(\beta)\leq d$.
 Then the Littlewood-Richardson coefficients
$c_{\alpha\,\beta}^\gamma$ are defined by %the identity
\begin{eqnarray}
  \label{eq:defLR}  S^\alpha V\otimes S^\beta
  V=\sum_{\gamma}c_{\alpha\,\beta}^\gamma S^\gamma V.
\end{eqnarray}

The partition obtained by suppressing the first part of $\alpha$ is
denoted by $\bar\alpha=(\alpha_2\geq\alpha_3\dots)$.
Observe  that $\bar\alpha_1=\alpha_2$.
We state a classical result due to Littlewood and Murnaghan
(see for example \cite{JaKe}).

\begin{prop}\label{prop:Murna}
Let $\alpha$, $\beta$ and $\gamma$ be three partitons of the same
integer $n$.
\begin{enumerate}
\item  If $g_{\alpha\,\beta\,\gamma}\neq 0$ then 
    \begin{eqnarray}
      \label{ineq:Murn}
      (n-\alpha_1)+(n-\beta_1)\geq n-\gamma_1.
    \end{eqnarray}
\item If $(n-\alpha_1)+(n-\beta_1)= n-\gamma_1$ then 
\begin{eqnarray}
      \label{eq:redMurn}
g_{\alpha\,\beta\,\gamma}=c_{\bar\alpha\,\bar\beta}^{\bar\gamma}.
  \end{eqnarray}

\end{enumerate}
 
\end{prop}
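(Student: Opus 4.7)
I would prove (i) by embedding $[\alpha]\otimes[\beta]$ into a tensor product of permutation modules and bounding the first row combinatorially; for (ii), by combining (i) with a Frobenius-reciprocity identity and induction on $n$.

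For part (i), let $M_\alpha=\operatorname{Ind}_{Y_\alpha}^{\Sym_n}(\mathbf{1})$ be the tabloid permutation module (with $Y_\alpha$ the Young subgroup), so that $[\alpha]\hookrightarrow M_\alpha$. A tabloid pair $(T,T')$ determines the integer matrix $A_{ij}=|\text{row $i$ of }T\cap\text{row $j$ of }T'|$, giving the Mackey-type decomposition
\[
M_\alpha\otimes M_\beta=\bigoplus_A M_{\lambda(A)},
\]
where $A$ runs over nonnegative integer matrices with row sums $\alpha$ and column sums $\beta$, and $\lambda(A)$ is the partition of the multiset of entries of $A$. By Young's rule, a constituent $[\gamma]$ of $M_{\lambda(A)}$ satisfies $\gamma\trianglerighteq\lambda(A)$, hence $\gamma_1\geq\lambda(A)_1\geq A_{11}$. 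But $A_{11}=\alpha_1-\sum_{j\geq 2}A_{1j}\geq\alpha_1-(n-\beta_1)=\alpha_1+\beta_1-n$, so $g_{\alpha\beta\gamma}\neq 0$ forces $\gamma_1\geq\alpha_1+\beta_1-n$, which is exactly~(\ref{ineq:Murn}).

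For part (ii), set $H=\Sym_{\alpha_1}\times \Sym_{n-\alpha_1}$ and consider $M^\alpha=\operatorname{Ind}_H^{\Sym_n}(\mathbf{1}\boxtimes[\bar\alpha])$. Pieri's rule gives $M^\alpha=\bigoplus_\lambda c^\lambda_{(\alpha_1),\bar\alpha}[\lambda]$, and the horizontal-strip condition $\lambda_{j+1}\leq\alpha_{j+1}$ forces $\lambda_1\geq\alpha_1$, with equality only for $\lambda=\alpha$ (multiplicity one). Frobenius reciprocity, the Littlewood-Richardson branching rules $\operatorname{Res}_H[\beta]=\bigoplus c^\beta_{\mu\nu}[\mu]\boxtimes[\nu]$ and the analogous expansion for $[\gamma]$, and the triviality of $[(\alpha_1)]$ (which gives $g_{\mu,\mu',(\alpha_1)}=\delta_{\mu\mu'}$), together yield the identity
\[
\sum_\lambda c^\lambda_{(\alpha_1),\bar\alpha}\,g_{\lambda\beta\gamma}\;=\;\sum_{\mu,\nu,\rho}c^\beta_{\mu\nu}\,c^\gamma_{\mu\rho}\,g_{\bar\alpha,\nu,\rho},
\]
where on the right $\mu\vdash\alpha_1$ and $\nu,\rho\vdash n-\alpha_1$. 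In the equality case $\gamma_1=\alpha_1+\beta_1-n$, part (i) applied to $\lambda$ with $\lambda_1>\alpha_1$ would force $\gamma_1\geq\lambda_1+\beta_1-n>\gamma_1$, so $g_{\lambda\beta\gamma}=0$ and the left-hand side collapses to $g_{\alpha\beta\gamma}$.

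The main obstacle lies in collapsing the right-hand sum to the single LR coefficient $c^{\bar\gamma}_{\bar\alpha\bar\beta}$. The Littlewood-Richardson constraints $\mu_1+\nu_1\leq\beta_1$ and $\mu_1+\rho_1\leq\gamma_1$, combined with the inductive Murnaghan inequality $\rho_1\geq\alpha_1+\alpha_2+\nu_1-n$ applied to $g_{\bar\alpha,\nu,\rho}$, all saturate simultaneously in the extremal case. An induction on $n$, using the inductive equality form to rewrite $g_{\bar\alpha,\nu,\rho}$ as an LR coefficient, then pins down the unique surviving triple and identifies the contribution as $c^{\bar\gamma}_{\bar\alpha\bar\beta}$. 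Verifying rigorously that all other triples $(\mu,\nu,\rho)$ yield zero in the extremal case is the technical heart of the proof.
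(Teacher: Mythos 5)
The paper does not actually prove Proposition~\ref{prop:Murna}; it states it as a classical Littlewood--Murnaghan result with a citation to \cite{JaKe}, so there is no in-paper argument to compare against. Assessing your proposal on its own terms:

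Part~(i) is correct and clean. The Mackey decomposition $M_\alpha\otimes M_\beta=\bigoplus_A M_{\lambda(A)}$ over contingency matrices with margins $\alpha,\beta$, Young's rule giving $\gamma\trianglerighteq\lambda(A)$ for any constituent $[\gamma]$, and the chain
$\gamma_1\geq\lambda(A)_1\geq A_{11}=\alpha_1-\sum_{j\geq2}A_{1j}\geq\alpha_1-(n-\beta_1)$
are all valid, and together they give~\eqref{ineq:Murn}.

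Part~(ii) has the right skeleton but is not a complete proof. The identity
$$
\sum_\lambda c^\lambda_{(\alpha_1),\bar\alpha}\,g_{\lambda\beta\gamma}
=\sum_{\mu,\nu,\rho}c^\beta_{\mu\nu}\,c^\gamma_{\mu\rho}\,g_{\bar\alpha,\nu,\rho}
$$
does follow from the projection formula, Frobenius reciprocity, and the branching rule for $\operatorname{Res}_H$, and the collapse of the left side to $g_{\alpha\beta\gamma}$ via part~(i) in the extremal case $\gamma_1=\alpha_1+\beta_1-n$ is a nice observation. But the remaining step --- identifying the right-hand sum with $c^{\bar\gamma}_{\bar\alpha\bar\beta}$ --- is exactly what you defer, and it is where all the work lies. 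Two concrete problems with the sketch: (a) the Littlewood--Richardson constraint you invoke is stated backward --- nonvanishing of $c^\beta_{\mu\nu}$ forces $\mu_1+\nu_1\geq\beta_1$, not $\leq$, and likewise $\mu_1+\rho_1\geq\gamma_1$; this sign matters for the saturation argument you gesture at. (b) You need to show not just that the inequalities on $(\mu,\nu,\rho)$ are tight, but that the surviving triples contribute exactly $c^{\bar\gamma}_{\bar\alpha\bar\beta}$ in total. That requires pinning down $\mu,\nu,\rho$ precisely, verifying that $g_{\bar\alpha,\nu,\rho}$ falls into the extremal case of the inductive hypothesis, and checking that the two LR multiplicities $c^\beta_{\mu\nu}$ and $c^\gamma_{\mu\rho}$ collapse to $1$; none of this is carried out. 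As written, part~(ii) is an outline with an acknowledged gap rather than a proof.
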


In this paper, we prove many other inequalities similar to the identity~\eqref{ineq:Murn}, that are consequences of the nonvanishing of $g_{\alpha\,\beta\,\gamma}$.
For the partitions $(\alpha,\beta,\gamma)$ satisfying  equality in such an
inequality, we prove a reduction rule for $g_{\alpha\,\beta\,\gamma}$
similar to the identity~\eqref{eq:redMurn}. 

Observe that the formula~\eqref{eq:redMurn} shows that the Kronecker
coefficients extend the Littlewood-Richardson ones. 
Indeed, given $\bar \alpha$, $\bar\beta$ and $\bar \gamma$, one can
find $\alpha=(\alpha_1,\bar
\alpha),\beta=(\beta_1,\bar \beta)$ and $\gamma=(\gamma_1,\bar\gamma)$
such that $|\alpha|=|\beta|=|\gamma|=:n$,
$(n-\alpha_1)+(n-\beta_1)=n-\gamma_1$. Then 
$ c_{\bar\alpha\,\bar\beta}^{\bar\gamma}=g_{\alpha\,\beta\,\gamma}$ is
a Kronecker coefficient.
If $ c_{\bar\alpha\,\bar\beta}^{\bar\gamma}\neq 0$ then 
 $(\bar\alpha,\bar\beta,\bar\gamma)$ satisfy the  Horn inequalities (see {\it e.g.} \cite{Fulton:survey} or below for details). 
If  $g_{\alpha\,\beta\,\gamma}\neq 0$, our inequalities for $(\alpha,\beta,\,\gamma)$ 
extend some Horn inequalities.
Fix such an inequality $\varphi(\bar \alpha,\bar\beta,\bar \gamma)\geq
0$. We want to find an inequality
$\tilde\varphi(\alpha,\beta,\gamma)\geq 0$ such that
\begin{enumerate}
\item If $g_{\alpha\,\beta\,\gamma}\neq 0$ then
  $\tilde\varphi(\alpha,\beta,\gamma)\geq 0$;
\item If $(n-\alpha_1)+(n-\beta_1)=n-\gamma_1$ then $\tilde\varphi(\alpha,\beta,\gamma)=\varphi(\bar \alpha,\bar\beta,\bar \gamma)$.
\end{enumerate}
For example, a Weyl's  theorem \cite{Weyl:ineq} asserts that if  $ c_{\bar\alpha\,\bar\beta}^{\bar\gamma}\neq 0$ then
\begin{eqnarray}
\label{eq:Weyl1}
\bar\gamma_{e+j-1}\leq \bar\beta_{j-1},
\end{eqnarray}
whenever $l(\bar\alpha)\leq e$ and $j\geq 2$.

Before stating  our  extension of Weyl's  theorem, we introduce some notation. 
Let $\Part(r,d)$ denote the set of subsets of  $\{1,\cdots,d\} $
 with $r$ elements.
 Given $I=\{i_1<\cdots<i_r\}\in\Part(r,d)$ and $\alpha=(\alpha_1\geq\cdots\geq\alpha_d)$ a partition of length at most $d$, we set $\alpha_I=(\alpha_{i_1}\geq\cdots\geq\alpha_{i_r})$.
 Observe that $\bar\alpha_I=(\alpha_{i_1+1}\geq\cdots\geq\alpha_{i_r+1})$.

\begin{theo}\label{th:WeylKron}
Let $e$ and $f$ be two positive integers. 
Let  $\alpha$, $\beta$, and $\gamma$ be three partitions of the same
integer $n$ such that 
\begin{eqnarray}
  \label{eq:condl}
  l(\alpha)\leq e+1,\ \ 
 l(\beta)\leq f+1,\ \mbox{ and }\ 
l(\gamma)\leq e+f+1.
\end{eqnarray}
Let $j\in\{2,\dots,f+1\}$.

\begin{enumerate}
\item If
$g_{\alpha\,\beta\,\gamma}\neq 0$
then
\begin{eqnarray}
\label{ineq:WeylKron}
n+\gamma_1+\gamma_{e+j}\leq \alpha_1+\beta_1+\beta_j
%\leq n+\gamma_1+\gamma_j.
\end{eqnarray}
\item Set $J=\{1,\dots,f\}-\{j-1\}$ and $K=\{1,\dots,e+f\}-\{e+j-1\}$.
If $n+\gamma_1+\gamma_{e+j}= \alpha_1+\beta_1+\beta_j$ then
$$
g(\alpha,\beta,\gamma)= \sum_{l(x)\leq 2e,\,l(y)\leq 2}
c(x,\bar\beta_J;\bar\gamma_K)\cdot
c(\gamma_1\geq\gamma_j,y;\beta_1\geq\beta_j)
\cdot 
g(\bar\alpha,x,y).
$$
\end{enumerate}
\end{theo}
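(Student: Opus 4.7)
My plan is to apply Ressayre's general theory of branching cones to the inclusion $\hG := \GL(V)\times\GL(W) \hookrightarrow G := \GL(V\otimes W)$, with $\dim V = e+1$ and $\dim W = f+1$. Under this identification,
\[
g_{\alpha\beta\gamma} = \dim \Hom_{\hG}\bigl(S^\alpha V\otimes S^\beta W,\; S^\gamma(V\otimes W)\bigr),
\]
so the set of nonvanishing triples forms a rational polyhedral cone whose facets are provided by Ressayre's \emph{well-covering pairs} $(\tau,w)$: $\tau$ a cocharacter of a maximal torus $\hT\subset\hG$, and $w$ a Weyl-group representative for $G$ modulo the parabolic $P(\tau)$.

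\textbf{Proof of (i).} I would exhibit an explicit well-covering pair realizing (\ref{ineq:WeylKron}). A natural candidate assigns weight $1$ to the first row of $V$ and $0$ to the other $e$ rows, and weights with $1$'s in positions $1$ and $j$ of $W$ and $0$'s elsewhere; the induced $\tau$-decomposition of $V\otimes W$ then has three distinct weight spaces of dimensions $e(f-1)$, $2e+f-1$, and $2$. The Weyl element $w$ is chosen so that the corresponding flag position inside $V\otimes W$ matches the subset $\{1\}\cup\{e+2,\dots,e+j\}$ appearing on the $\gamma$-side of the inequality. Verifying the well-covering property reduces to a Schubert-calculus / transversality computation on a Grassmannian of $V\otimes W$, and is morally the Horn--Klyachko argument for the classical Weyl inequality transplanted into this tensor-product setting.

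\textbf{Proof of (ii).} Equality in (\ref{ineq:WeylKron}) places $(\alpha,\beta,\gamma)$ on the face of the branching cone associated to $(\tau,w)$, and Ressayre's reduction theorem identifies $g_{\alpha\beta\gamma}$ with a branching multiplicity for the Levi pair $\hat L := \hG^\tau \simeq \GL_1\times\GL_e\times\GL_2\times\GL_{f-1}$, $L := G^\tau$, acting on the corresponding $\tau$-fixed component. Since $L$ splits as a product of three GL-factors coming from the three $\tau$-weight spaces of $V\otimes W$, and each block is acted upon by a specific subgroup of $\hat L$, the multiplicity decomposes as a triple convolution indexed by intermediate $\hat L$-weights $(x,y)$. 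The two-dimensional top-weight block $V^{(0)}\otimes W^{(0)}$ yields the two-row LR factor $c(\gamma_1\ge\gamma_j,y;\beta_1\ge\beta_j)$; the middle block, via its $V^{(1)}\otimes W^{(0)}$ summand of dimension $2e$, yields the smaller Kronecker factor $g(\bar\alpha,x,y)$ coming from the inclusion $\GL_e\times\GL_2\subset\GL_{2e}$ (which accounts for the bounds $l(x)\le 2e$ and $l(y)\le 2$); and the remaining $V^{(0)}\otimes W^{(1)}$ and $V^{(1)}\otimes W^{(1)}$ pieces combine to produce the LR factor $c(x,\bar\beta_J;\bar\gamma_K)$ through a classical branching of the type $\GL_{f-1}\subset\GL_{e+f-1}$. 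Summing yields the stated identity.

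\textbf{Main obstacle.} The principal difficulty is to verify the well-covering condition for the chosen pair $(\tau,w)$: this is a non-tautological cohomological/transversality statement inside the partial flag variety of $V\otimes W$, equivalent to a specific Belkale--Kumar product equalling the class of a point. Once this is established, the precise three-factor form of the reduction formula follows by tracking the $\hat L$-action on each block of $L$ and decomposing the resulting representations; this last step is essentially bookkeeping, but the interplay between the Kronecker factor coming from one block and the two LR factors coming from the others, as well as correctly apportioning the entries of $\gamma$ via the Weyl element $w$, requires care.
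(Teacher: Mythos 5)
Your proposal uses the right overall framework (GIT, well-covering pairs, reduction to a Levi) and you identify essentially the correct one-parameter subgroup: the three weight-space dimensions $e(f-1)$, $2e+f-1$, $2$ on $V\otimes W$ match the ones appearing in the paper's proof, and your Levi $\GL_1\times\GL_e\times\GL_2\times\GL_{f-1}$ is the correct centralizer $G^{\lambda_0}$. But there are two significant gaps.

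First, and most importantly, the step you flag as the ``main obstacle'' --- a direct Schubert-calculus/transversality verification of the well-covering condition on a flag variety of $V\otimes W$ --- is not what the paper does, and in fact would be essentially intractable: computing intersection numbers in the flag variety of a tensor product $V\otimes W$ under the $\GL(V)\times\GL(W)$-action is precisely a Kronecker-type Schubert problem, not a classical Horn--Klyachko one. The paper sidesteps this by a \emph{deformation} argument set up in Section~\ref{sec:proofmain}: there is a one-parameter family $(C_\tau,\lambda_\tau)$, the well-covering of $(C_\tau,\lambda_\tau)$ for $\tau$ large follows from the already-established Horn description of $\QQ_{\geq 0}\LR(e,f,e+f)$ (Theorem~\ref{th:LR}), and one observes that $C_\tau^+$ and $P(\lambda_\tau)$ are independent of $\tau$ in the relevant range, so the well-covering property persists as $\tau\to 0$. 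Without this deformation argument your plan for part (i) is missing its key lemma.

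Second, two smaller points. (a) You work with the full flag variety of $V\otimes W$ (equivalently the branching cone for $\GL(V)\times\GL(W)\subset\GL(V\otimes W)$), whereas the paper has to use the partial flag variety $\Fl(1,\dots,e+f+1;E\otimes F)$ to impose the condition $l(\gamma)\leq e+f+1$; the two cones are genuinely different and the facets need not correspond. (b) Your labelling of the weight blocks in part (ii) is inconsistent: the two-dimensional \emph{top}-weight block is $V^{(1)}\otimes W^{(1)}$, not $V^{(0)}\otimes W^{(0)}$ (which has dimension $e(f-1)$), and the $2e$-dimensional piece is $V^{(0)}\otimes W^{(1)}$, not $V^{(1)}\otimes W^{(0)}$ (which has dimension $f-1$). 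These may just be typos, but as written the claimed match between blocks and the three factors of the reduction formula does not hold; the actual computation, via the fixed-point component $C_0\simeq\Fl(\bar E)\times\Fl(F_+)\times\PP(F_-\oplus m)\times\Fl(1,\dots,e+f-1;\bar E\otimes(F_-\oplus m)\oplus F_+)\times\PP(F_-\oplus m)$, is more involved than ``bookkeeping'' and needs to be carried out as in Section~\ref{sec:reduction}.
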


\begin{NB}
  In the statement of Theorem~\ref{th:WeylKron} (and sometimes below)
  we denote $c_{\alpha\,\beta}^\gamma$ and $g_{\alpha\,\beta\,\gamma}$
  respectively by $c(\alpha,\beta;\gamma)$ and $g(\alpha,\beta,\gamma)$.
\end{NB}

\bigskip
Theorem~\ref{th:WeylKron} extends Weyl's theorem in the sense that if 
$(n-\alpha_1)+(n-\beta_1)= n-\gamma_1$ then the inequality~\eqref{ineq:WeylKron} is equivalent
to $\gamma_{e+j}\leq\beta_j$, that is to the inequality~\eqref{eq:Weyl1}.\\

For $I\in \Part(r,d)$,
consider the partition
$$
\tau^I=(d-r+1-i_1\geq d-r+2-i_2\geq\cdots\geq d-i_r).
%(i_r-r\geq\cdots\geq i_1-1).
$$
Set $|\alpha_I|:=\sum_{i\in
  I}\alpha_i$.
Observe that 
$|\bar\alpha_I|:=\sum_{i\in
  I}\alpha_{i+1}$.
We can now state our main result.

\begin{theo}
  \label{th:HornKron}
Let  $\alpha$, $\beta$, and $\gamma$ be three partitions of the same
integer $n$ 
satisfying the conditions~\eqref{eq:condl}.

Assume that  $g_{\alpha\,\beta\,\gamma}\neq 0$. 
Then
\begin{eqnarray}
  \label{ineq:HornKron}
n+|\bar\alpha_I|-\alpha_1+|\bar\beta_J|-\beta_1\geq 
|\bar\gamma_K|- \gamma_1,
\end{eqnarray}
for any $0<r<e$, $0<s<f$,
$I\in\Part(r,e)$, $J\in\Part(s,f)$ and $K\in\Part(r+s,e+f)$ such
that
\begin{eqnarray}
  \label{eq:condcohom}
  c_{\tau^I\,\tau^J}^{\tau^K}=1.
\end{eqnarray}
\end{theo}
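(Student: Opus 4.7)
The plan is to derive the inequality~\eqref{ineq:HornKron} by applying Ressayre's characterisation of the facets of a GIT branching cone to the embedding that realises Kronecker coefficients. Put $V=\CC^{e+1}$ and $W=\CC^{f+1}$, and consider the tensor embedding $\GL(V)\times\GL(W)\hookrightarrow\GL(V\otimes W)$. Under the length conditions~\eqref{eq:condl}, the Kronecker coefficient $g_{\alpha\,\beta\,\gamma}$ is the multiplicity of $S^\alpha V\otimes S^\beta W$ in the restriction of $S^\gamma(V\otimes W)$ to $\GL(V)\times\GL(W)$. The saturation of the set of triples with this multiplicity nonzero is a rational polyhedral cone, whose essential facets are, by Ressayre's general theorem, indexed by well-covering pairs $(C,\hw)$ of a one-parameter subgroup $C$ of a maximal torus of $\GL(V)\times\GL(W)$ and a Weyl element $\hw$ of $\GL(V\otimes W)$; each such pair prescribes a linear inequality on $(\alpha,\beta,\gamma)$.

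The heart of the proof is the construction, from the data $(I,J,K)$ with $c_{\tau^I\,\tau^J}^{\tau^K}=1$, of a well-covering pair whose facet inequality is exactly~\eqref{ineq:HornKron}. Decomposing $V=\CC e_0\oplus\bar V$ and $W=\CC f_0\oplus\bar W$ with $\dim\bar V=e$ and $\dim\bar W=f$, one obtains
$$V\otimes W\;=\;\CC(e_0\otimes f_0)\;\oplus\;(e_0\otimes\bar W)\;\oplus\;(\bar V\otimes f_0)\;\oplus\;(\bar V\otimes\bar W).$$
On the Littlewood--Richardson piece $(e_0\otimes\bar W)\oplus(\bar V\otimes f_0)\simeq\bar V\oplus\bar W$, the classical Belkale--Knutson--Tao--Ressayre construction turns the identity $c_{\tau^I\,\tau^J}^{\tau^K}=1$ into a well-covering pair for $\GL(\bar V)\times\GL(\bar W)\hookrightarrow\GL(\bar V\oplus\bar W)$, producing the Horn inequality $|\bar\alpha_I|+|\bar\beta_J|\geq|\bar\gamma_K|$. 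On the line $\CC(e_0\otimes f_0)$ the Murnaghan one-parameter subgroup implicit in Proposition~\ref{prop:Murna} produces the inequality $n+\gamma_1\geq\alpha_1+\beta_1$. I would assemble these two ingredients into a single pair $(C,\hw)$ on $V\otimes W$ whose facet inequality is the sum of the two, i.e.\ precisely~\eqref{ineq:HornKron}.

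The main obstacle is the verification of the well-covering condition for the composite pair. Although each ingredient is well-covering on its own factor, the summand $\bar V\otimes\bar W$ contributes extra tangent directions at the chosen fixed point in the flag variety of $\GL(V\otimes W)$ that have no analogue in the classical Littlewood--Richardson setting. One must show, by a transversality argument, that the $C$-weighted contribution of these extra directions balances so that the ``codimension equals sum of codimensions'' criterion defining well-covering pairs still holds. The length conditions~\eqref{eq:condl} should enter at precisely this point, to guarantee that no Schubert condition on the tensor summand $\bar V\otimes\bar W$ is active; once this is in place, Ressayre's general theorem delivers~\eqref{ineq:HornKron} at once, and the same scheme specialises, for an appropriate degenerate choice of $(I,J,K)$, to the Weyl-type inequality of Theorem~\ref{th:WeylKron}(i).
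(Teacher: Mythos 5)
Your framework is the right one — you correctly reduce the problem to producing a well-covering pair for the GIT problem attached to $\GL(E)\times\GL(F)\hookrightarrow\GL(E\otimes F)$, and you correctly identify that the target inequality ought to combine the Murnaghan facet with the Horn facet attached to $(I,J,K)$. But the step you flag as ``the main obstacle'' is in fact the whole proof, and the transversality argument you gesture at is not given; as it stands the proposal has a genuine gap exactly there.

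More concretely: it is \emph{not} straightforward to exhibit a single one-parameter subgroup and fixed component whose Mumford weight gives $\varphi_1(\alpha,\beta,\gamma)=|\bar\alpha|+|\bar\beta|-|\bar\gamma|+|\bar\alpha_I|+|\bar\beta_J|-|\bar\gamma_K|$ and then verify well-covering directly. The tensor summand $\bar E\otimes\bar F$ (your $\bar V\otimes\bar W$) creates weight coincidences at the limit that make the fixed component $C_1$ strictly larger than the fixed component relevant for the codimension-two face, and the Bia\l{}ynicki-Birula cell changes; a naive ``codimension balancing'' argument does not close this gap. The paper sidesteps the direct verification entirely by an interpolation argument: it constructs a one-parameter family $\lambda_\tau$ (on the \emph{partial} flag variety $\Fl(1,\dots,e+f+1;E\otimes F)$, not the full one — this matters for ampleness of $\Li_\gamma$), observes that for $\tau>1$ the parabolic $P(\lambda_\tau)$ and the cell $C_\tau^+$ are constant so the pair $(C_\tau,\lambda_\tau)$ stays well-covering, reads off that the associated face of the Kronecker cone is the known codimension-two face $\Face_{IJK}$ for all $\tau>1$, and then concludes $\varphi_1\geq 0$ by elementary convex geometry (the ``transition'' value $\tau_0$ at which the face can become larger must satisfy $\tau_0\leq 1$). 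No well-covering claim at $\tau=1$ is used for Theorem~\ref{th:HornKron}; that statement is only needed for the codimension-one assertion of Theorem~\ref{th:codimoneface}, which the paper handles by a separate dimension count in Section~\ref{sec:codim}.

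So while your plan and the paper share the GIT/well-covering framework, the specific idea that makes the argument close — deform the one-parameter subgroup, use persistence of well-covering for $\tau>1$, and take the limit — is absent from your proposal, and without it the verification you describe as ``one must show\dots'' is left undone and would require its own nontrivial argument. I would also refine your vector-space decomposition: you need the further splittings $\bar E=E_+\oplus E_-$, $\bar F=F_+\oplus F_-$ with $\dim E_+=r$, $\dim F_+=s$ in order to even write down the right $\lambda_\tau$; the coarse decomposition you use only encodes the Murnaghan direction.
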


If $(n-\alpha_1)+(n-\beta_1)= n-\gamma_1$ then the inequality~\eqref{ineq:HornKron} is equivalent
to 
\begin{equation}
  \label{eq:v1} 
|\bar\alpha_I|+|\bar\beta_J|\geq 
|\bar\gamma_K|,
 \end{equation}
which is a Horn inequality (see \cite{Fulton:survey} or Section~\ref{sec:Horn}).\\

\begin{NB}
Since inequalities~\eqref{ineq:Murn}, \eqref{ineq:WeylKron} and
\eqref{ineq:HornKron} are linear in $(\alpha,\beta,\gamma)$, the
condition $g_{\alpha\,\beta\,\gamma}\neq 0$ in
Proposition~\ref{prop:Murna} and  Theorem~\ref{th:WeylKron} and
\ref{th:HornKron} can be replaced by the weaker condition $g_{k\alpha\,k\beta\,k\gamma}\neq 0$ 
for some positive $k$.
%The interested reader can consult \cite{Re
\end{NB}

We get a reduction formula  for the coefficients $g_{\alpha\,\beta\,\gamma}$  if 
 the inequality~\eqref{ineq:HornKron} is saturated. 
 If $I\in\Part(r,d)$, we 
  denote by  $I_-\in \Part(d-r,d)$ the complement of $I$ in $\{1,\dots,d\}$.
 By symmetry we also set $I_+=I$.
 
 \begin{theo}
 \label{th:reduction}
Let  $\alpha$, $\beta$, and $\gamma$ be three partitions of the same
integer $n$ 
satisfying the conditions~\eqref{eq:condl}.

 Let $(I,J,K)$ be a triple that appears in Theorem~\ref{th:HornKron}
 (in particular satisfying the condition~\eqref{eq:condcohom}). 
 We assume   that
 \begin{eqnarray}
  \label{eq:HornKron}
n+|\bar\alpha_I|-\alpha_1+|\bar\beta_J|-\beta_1=
|\bar\gamma_K|- \gamma_1. 
% |\bar\gamma|+|\bar\gamma_K| =|\bar\alpha|+|\bar\alpha_I|+|\bar\beta|+|\bar\beta_J|.
\end{eqnarray}
Then $g(\alpha,\beta,\gamma)$ is equal to
\begin{equation}
 \label{eq:red}
\begin{array}{ll}
\sum_{a,b,x,y,u,v} &
c(\bar\alpha_{I_-}, \bar\beta_{J_-};y)\cdot
c(x,y;\bar\gamma_{K_+})\cdot
c(u,v;\bar\gamma_{K_-})\cdot\\
&c(a,u;\bar\alpha_{I}) \cdot
c(b,v;\bar\beta_{J}) \cdot
g(a,b,x),
\end{array} 
\end{equation}

where the sum runs  over the partitions $a,b,x,y,u,v$ satisfying
\begin{eqnarray}
\label{eq:indicesum}
\begin{array}{lll}
l(x)\leq (e-r)(f-s),&l(a)\leq e-r,&l(u)\leq e-r,\\[-3pt]
l(y)\leq r+s,&l(b)\leq f-s,&l(v)\leq f-s.
\end{array}
\end{eqnarray}
 \end{theo}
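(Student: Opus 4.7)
The strategy is to derive the formula as a Levi reduction on the saturated face of the Kronecker cone, following the general principle that branching multiplicities on a face cut out by a well-covering pair decompose as Levi multiplicities.

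First, I would realise $g(\alpha, \beta, \gamma)$ as the dimension of a space of invariants on a product of partial flag varieties. Writing $V = \CC^{e+1}$, $W = \CC^{f+1}$ and $U = \CC^{e+f+1}$, the Kronecker coefficient appears as a section-multiplicity on $\GL(V)/P_\alpha \times \GL(W)/P_\beta \times \GL(U)/P_\gamma$ for appropriate parabolics, under a suitable $\GL$-action made possible by (\ref{eq:condl}). In this setup, Theorem~\ref{th:HornKron} exhibits the inequality (\ref{ineq:HornKron}) as the semistability inequality associated to a one-parameter subgroup $\lambda_0$ of the maximal torus, whose block structure is dictated by $(I, J, K)$; the cohomological condition $c_{\tau^I \tau^J}^{\tau^K} = 1$ is what makes the pair $(L, \lambda_0)$ well-covering, where $L$ denotes the centraliser of $\lambda_0$. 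The saturation assumption (\ref{eq:HornKron}) places $(\alpha, \beta, \gamma)$ exactly on the face of the Kronecker cone cut out by this pair, so the general reduction theorem for well-covering pairs replaces $g(\alpha, \beta, \gamma)$ by a corresponding Levi multiplicity.

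Second, I would compute this Levi multiplicity explicitly. The group $L$ decomposes as a product of general linear groups of sizes $r, e-r$ on the $V$-side, $s, f-s$ on the $W$-side, and $r+s, e+f-r-s$ on the $U$-side. A K\"unneth step separates the Levi multiplicity into a contribution from each pair of blocks. Restricting to $\GL_r\times\GL_{e-r}\subset \GL_e$ produces the Littlewood--Richardson coefficient $c(a,u;\bar\alpha_I)$, where $\bar\alpha_I$ is split into a Kronecker-participating part $a$ and a bridging part $u$; symmetrically, $c(b,v;\bar\beta_J)$ appears on the $W$-side. On the $U$-side, restricting to $\GL_{r+s}\times\GL_{e+f-r-s}\subset \GL_{e+f}$ yields the two LR coefficients $c(x,y;\bar\gamma_{K_+})$ and $c(u,v;\bar\gamma_{K_-})$. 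The off-face complementary pieces $\bar\alpha_{I_-}$ and $\bar\beta_{J_-}$ couple through the bridging coefficient $c(\bar\alpha_{I_-}, \bar\beta_{J_-}; y)$, while the residual $(e-r)\times(f-s)$ block still supports a Kronecker branching and contributes the smaller factor $g(a,b,x)$. Summing over all compatible partitions $a,b,u,v,x,y$ then gives the identity (\ref{eq:red}). The length bounds (\ref{eq:indicesum}) are forced by the dimensions of the corresponding Levi blocks; in particular, $l(x) \le (e-r)(f-s)$ reflects that $x$ parameterises an irreducible representation in the residual Kronecker problem, whose ambient space is the tensor product $\CC^{e-r}\otimes\CC^{f-s}$ of dimension $(e-r)(f-s)$.

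The main obstacle will be matching the abstract Levi decomposition to the explicit shape of (\ref{eq:red}). One must verify that the well-covering pair associated to $(I, J, K)$ cuts $L$ into precisely the blocks listed above, that each Levi factor branches into the claimed LR or Kronecker form, and that the summation range for $a, b, u, v, x, y$ coincides with (\ref{eq:indicesum}) with no overcounting. This requires careful tracking of which index set among $I_+, I_-, J_+, J_-, K_+, K_-$ corresponds to which block, and a verification that the product structure on the face agrees with the five-LR-plus-one-Kronecker pattern appearing in (\ref{eq:red}).
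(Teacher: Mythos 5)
Your high-level strategy---reduce $g(\alpha,\beta,\gamma)$ via a well-covering pair to a multiplicity on a Levi, then unravel it with repeated LR/Kronecker branchings---is the route the paper actually takes. But the geometric model you set up is wrong in a way that would prevent you from carrying out the second step, and the coefficients in (\ref{eq:red}) would not fall out of it.

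You place the Kronecker coefficient on $\GL(V)/P_\alpha\times\GL(W)/P_\beta\times\GL(U)/P_\gamma$ with $U=\CC^{e+f+1}$, assert that the Levi has a block $\GL_{r+s}\times\GL_{e+f-r-s}$, and explain the two $\gamma$-side LR factors as a restriction to $\GL_{r+s}\times\GL_{e+f-r-s}\subset\GL_{e+f}$. There is no such $\GL_{e+f}$ in the Kronecker picture. The relevant variety (Section~\ref{sec:krongit}) is $X=\Fl(E)\times\Fl(F)\times\Fl(1,\dots,e+f+1;E\otimes F)$ with $G=\GL(E)\times\GL(F)$, $\dim E=e+1$, $\dim F=f+1$: the third factor is a \emph{partial} flag variety in the $(e+1)(f+1)$-dimensional tensor product $E\otimes F$, not a flag variety of a $(e+f+1)$-dimensional space, and no independent general linear group acts on it. Consequently the centraliser is $G^{\lambda_1}=\GL(E_+)\times\GL(E_-)\times\CC^*\times\GL(F_+)\times\GL(F_-)\times\CC^*$ with blocks of sizes $r,e-r,1,s,f-s,1$ only; there is no $\GL_{r+s}$ or $\GL_{e+f-r-s}$ factor.

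The missing idea is that the factors in (\ref{eq:red}) come from the $\lambda_1$-\emph{weight decomposition of} $E\otimes F$, which mixes tensor-product and direct-sum pieces. Restricting the line bundle to the fixed component $C_1$ yields sections $S^{\bar\gamma_{K_+}}(E_-\otimes F_-\oplus E_+\oplus F_+)^*\otimes S^{\bar\gamma_{K_-}}(E_-\oplus F_-)^*$ tensored against Schur modules of $E_\pm,F_\pm$. The two LR factors $c(x,y;\bar\gamma_{K_+})$, $c(u,v;\bar\gamma_{K_-})$ come from applying (\ref{eq:LRCoefLevi}) to the direct sums $E_-\otimes F_-\oplus(E_+\oplus F_+)$ and $E_-\oplus F_-$, not from a subgroup restriction inside some $\GL_{e+f}$; and $g(a,b,x)$ reappears because one must then apply (\ref{eq:KronGL}) to $S^x(E_-\otimes F_-)$---the tensor-product summand inside the weight-two space. (Your observation that $l(x)\leq(e-r)(f-s)$ reflects the dimension of $\CC^{e-r}\otimes\CC^{f-s}$ is correct and is exactly this $E_-\otimes F_-$.) With your model the residual Kronecker never appears, because you have nowhere to see $E_-\otimes F_-$ as a single $\GL$-module carrying a diagonal $\GL(E_-)\times\GL(F_-)$-action. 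Also, you attribute the well-covering property of the pair to the condition $c_{\tau^I\tau^J}^{\tau^K}=1$; in the paper this condition enters earlier (it is what selects the codimension-two face $\Face_{IJK}$ via Theorem~\ref{th:LR} and Lemma~\ref{lem:FM}), whereas well-covering of $(C_1,\lambda_1)$ is obtained from $(C_\tau,\lambda_\tau)$ for $\tau$ large and the constancy of $C_\tau^+$ and $P(\lambda_\tau)$.
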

 
Note that in Theorem~\ref{th:reduction}, we needn't assume that
$g_{\alpha\beta\gamma}\neq 0$.

 Let $\Kron(e+1,f+1,e+f+1)$ denote the set of triples  $(\alpha,\,\beta,\,\gamma)$ of partitions 
  such that $|\alpha|=|\beta|=|\gamma|$, $g_{\alpha\beta\gamma}\neq 0$ and 
$l(\alpha)\leq e+1,\,l(\beta)\leq f+1,\,l(\gamma)\leq e+f+1$.
Then $\Kron(e+1,f+1,e+f+1)$ is a finitely generated semigroup in
$\NN^{2e+2f+3}$. 
In particular, the  cone 
$\QQ_{\geq 0}\Kron(e+1,f+1,e+f+1)$ generated by $\Kron(e+1,f+1,e+f+1)$ is a 
closed convex polyhedral cone. 

\begin{theo}
\label{th:codimoneface}
The inequalities~\eqref{ineq:WeylKron} in Theorem~\ref{th:WeylKron}
and the inequalities~\eqref{ineq:HornKron} in Theorem~\ref{th:HornKron} are essential, that is correspond 
to codimension one faces of $\QQ_{\geq 0}\Kron(e+1,f+1,e+f+1)$.
\end{theo}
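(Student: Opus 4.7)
The plan is to combine Theorems~\ref{th:WeylKron} and \ref{th:HornKron}, which show that the inequalities hold on $\QQ_{\geq 0}\Kron(e+1,f+1,e+f+1)$, with the reduction formulas in Theorem~\ref{th:WeylKron}(ii) and Theorem~\ref{th:reduction}, and to finish by a dimension count. For each linear functional in question, its zero locus cuts out a face $F$ of the cone. Since the cone lies in the linear subspace $|\alpha|=|\beta|=|\gamma|$ and is full-dimensional therein, it has dimension $2e+2f+1$, so proving that $F$ has codimension one amounts to producing $2e+2f$ affinely independent rational points of $F$ lying in $\QQ_{\geq 0}\Kron(e+1,f+1,e+f+1)$.

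The reduction formulas supply the required points. For the Horn inequality~\eqref{ineq:HornKron}, formula~\eqref{eq:red} expresses $g(\alpha,\beta,\gamma)$ on $F$ as a nonnegative sum indexed by auxiliary partitions $(a,b,x,u,v,y)$ of lengths~\eqref{eq:indicesum}, each summand being a product of five Littlewood-Richardson coefficients and one smaller Kronecker coefficient $g(a,b,x)$. A triple in $F$ with $g_{\alpha\beta\gamma}\neq 0$ is then obtained by selecting such auxiliary data with all six factors positive and assembling $\bar\alpha$ from $(\bar\alpha_I,\bar\alpha_{I_-})$ via the splitting indexed by $(I,I_-)$, and analogously for $\bar\beta$ and $\bar\gamma$; the head parts $\alpha_1,\beta_1,\gamma_1$ remain free subject only to $|\alpha|=|\beta|=|\gamma|$ and the saturation identity~\eqref{eq:HornKron}. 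By the classical characterization of essential Horn inequalities, the multiplicity-one hypothesis $c_{\tau^I\tau^J}^{\tau^K}=1$ in~\eqref{eq:condcohom} ensures that the corresponding Horn splittings fill a codimension-one subset of the Littlewood-Richardson cone of lengths $(e,f,e+f)$; adding its dimension to that of the smaller Kronecker cone of $(a,b,x)$ and to the two residual free head parameters yields exactly $2e+2f$. For Theorem~\ref{th:WeylKron} the strategy is identical: one has $I=\{1,\ldots,e\}$ and $I_-=\emptyset$, the corresponding LR factor is trivial, and Theorem~\ref{th:WeylKron}(ii) plays the role of~\eqref{eq:red}.

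The main obstacle is making the dimension count tight: one must check that the parametrization of $F$ by auxiliary data is generically finite onto its image, so that dimensions add without overcounting, and that the smaller Kronecker cone attains full dimension within its own ambient space. The first point is a rigidity statement for the splittings $(I,I_-)$, $(J,J_-)$, $(K,K_+)$ that follows from the multiplicity-one hypothesis~\eqref{eq:condcohom}; concretely, for generic boundary data $(\bar\alpha_I,\bar\alpha_{I_-},\bar\beta_J,\bar\beta_{J_-},\bar\gamma_K,\bar\gamma_{K_-})$ exactly one choice of $(a,b,x,u,v,y)$ contributes. The second point reduces to the analogous statement for smaller $e$ and $f$, and can be settled by induction on $e+f$ after a direct check of the base cases.
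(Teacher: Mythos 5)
Your strategy is genuinely different from the paper's, but it is not carried out, and as written the dimension count does not close. The paper's argument proceeds by contradiction through the \emph{degree} of the stretched Kronecker polynomial $n\mapsto g_{n\alpha\,n\beta\,n\gamma}$. On the codimension-two face $\Face_{IJK}$ (where both the Littlewood--Murnaghan equality and the Horn equality hold), one has $g_{\alpha\beta\gamma}=c_{\bar\alpha\bar\beta}^{\bar\gamma}=c_{\bar\alpha_I\,\bar\beta_J}^{\bar\gamma_K}\cdot c_{\bar\alpha_{I_-}\,\bar\beta_{J_-}}^{\bar\gamma_{K_-}}$ by Derksen--Weyman's factorization, so Lemma~\ref{lem:degLR} caps the stretched degree by an explicit $d_{\max}$; while generically on the face $\Face_1$ cut out by $\varphi_1=0$ the stretched degree equals $\mode(G^{\lambda_1},C_1)$, which is shown to exceed $d_{\max}$ by $ab(r+s)-1\ge 0$. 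Hence $\Face_1\supsetneq\Face_{IJK}$, so $\Face_1$ has codimension one. This degree invariant is the central device, and it is entirely absent from your proposal.

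Your alternative is to exhibit $2e+2f$ independent points of $\Face_1$ with nonvanishing Kronecker coefficient, using the reduction formula~\eqref{eq:red} to certify positivity. That is a legitimate strategy in principle, but there are concrete gaps. First, the arithmetic: you write that the dimension of the codimension-one Horn face of $\QQ_{\geq 0}\LR(e,f,e+f)$ (which is $2e+2f-2$), plus the dimension of the small Kronecker cone of $(a,b,x)$, plus two head parameters, ``yields exactly $2e+2f$''; this forces the small Kronecker cone to be $0$-dimensional, which is false. Either you double-count parameters (since $(a,b,x,u,v,y)$ are auxiliary summation indices, not coordinates of the point $(\alpha,\beta,\gamma)$) or a term is missing; in any case the count is not consistent as stated. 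Second, the reassembly issue is not merely technical: interleaving $\bar\alpha_I$ on positions $I$ with $\bar\alpha_{I_-}$ on positions $I_-$ does not in general produce a weakly decreasing sequence, and nothing in~\eqref{eq:red} guarantees the assembled $\bar\alpha,\bar\beta,\bar\gamma$ are partitions. You would need an explicit open region of auxiliary data where this works, and you do not construct one. Third, the ``generically finite parametrization'' and ``full-dimensionality of the small Kronecker cone by induction on $e+f$'' are asserted, not proved; the multiplicity-one hypothesis~\eqref{eq:condcohom} is a cohomological statement and does not immediately give the claimed rigidity for the summation in~\eqref{eq:red}. Until those three points are settled, the proposal does not establish the theorem.
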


One can guess to describe the complete minimal list $\mcL$ of
inequalities characterizing $\QQ_{\geq 0}\Kron(e+1,f+1,e+f+1)$.
Such a list is known for the Littlewood-Richardson coefficients (see
Theorem~\ref{th:LR} below for details). In principle, \cite{GITEigen}
gives $\mcL$. 
Nevertheless, it is known to be untractable to make this description
very explicit. Indeed, one first need to describe the so-called
adapted one-parameter subgroups by describing the collection of
hyperplanes spanned by subsets of a given set: a tricky combinatorial
problem. And secondly one need to understand an unknown Schubert problem. 
In this paper we describe a natural subset of $\mcL$ related with the
Horn cone. 
   
\bigskip
Inequality~\eqref{ineq:Murn} defines a codimension one face $\face_{LM}$
of $\QQ_{\geq 0}\Kron(e+1,f+1,e+f+1)$. Here ``$LM$'' stands for
Littlewood-Murnaghan. Each Horn inequality~\eqref{eq:v1} or Weyl
inequality~\eqref{eq:Weyl1} define a face $\face$ of codimension two
contained in  $\face_{LM}$. By convex geometry $\face$ has to be
contained in a second codimension one face $\face'$ of $\QQ_{\geq
  0}\Kron(e+1,f+1,e+f+1)$. 
Basically, Theorem~\ref{th:WeylKron} and  \ref{th:HornKron} describe
this face $\face'$. 

\bigskip
\noindent{\bf Comparaison between Theorems~\ref{th:WeylKron} and \ref{th:HornKron}.}
With  $I,J$ and $K$ respectively equal to $\{1,\dots,e\}$,
$\{1,\dots,f\}-\{j-1\}$,
and $\{1,\dots,e+f\}-\{e+j-1\}$
(where $j\in\{2,\dots,f+1\}$), we have 
$c_{\tau^I\,\tau^J}^{\tau^K}=1$.
The inequality~\eqref{ineq:HornKron} gives 
\begin{eqnarray}\label{eq:ineqred}
2n+2\gamma_1+\gamma_j\geq 2\alpha_1+2\beta_1+\beta_j.
\end{eqnarray}
This inequality is satisfied if $g_{\alpha\,\beta\,\gamma}\neq 0$. 
But the corresponding face has codimension 2 in $\QQ_{\geq
  0}\Kron(e+1,f+1,e+f+1)$.
Hence the inequality~\eqref{eq:ineqred} is not essential.
More precisely, it is a consequence
of  inequalities~\eqref{ineq:Murn} and \eqref{ineq:WeylKron}.

\bigskip
In Section~\ref{sec:sgrp}, we define and compare several semigroups. 
In Section \ref{sec:GITEigen}, we recall some results from
  \cite{GITEigen} that allows to describe some cones generated by
  these semigroups. In Section~\ref{sec:Horn}, we describe the support
  of the LR-coefficients $c_{\alpha\beta}^\gamma$ for partitions
  satisfying $l(\alpha)\leq e$, $l(\beta)\leq f$ and $l(\gamma)\leq
  e+f$, for fixed positive integers $e$ and $f$. Note that these
  assumptions are natural if one thinks about the LR-coefficients as
  multiplicities for the branching from $\GL_e\times\GL_f$ to
  $\GL_{e+f}$.
It is a variation of the classical Horn problem. 
The next sections contain the proofs of the statements  of the
introduction. 

\bigskip
{\bf Acknowledgements.} The author is partially supported by the French National Agency
(Project GeoLie ANR-15-CE40-0012) and the Institut Universitaire de
France (IUF).

\section{Semigroups}
\label{sec:sgrp}

 \subsection{Definitions}

\subsubsection{Kronecker semigroups}
We extend the definition of $g_{\alpha\beta\gamma}$ to any triple
$(\alpha,\beta,\gamma)$ of partitions by setting $g_{\alpha\beta\gamma}=0$
if the condition $|\alpha|=|\beta|=|\gamma|$ does not hold.
Let $e$, $f$, and $g$ be three positive integers. We define
  $\Kron(e,f,g)$ to be the set of triples $(\alpha,\,\beta,\,\gamma)$ of partitions 
  such that $g_{\alpha\beta\gamma}\neq 0$ and 
$l(\alpha)\leq e,\,l(\beta)\leq f,\,l(\gamma)\leq g$.
It is well known that $\Kron(e,f,g)$ is a finitely generated semigroup
of $\NN^{e+f+g}$.\\

\subsubsection{ Littlewood-Richardson semigroups}
We define 
  $\LR(e,f,g)$ to be the set of triples $(\alpha,\,\beta,\,\gamma)$ of partitions 
  such that $c_{\alpha\beta}^\gamma\neq 0$ and 
$l(\alpha)\leq e,\,l(\beta)\leq f,\,l(\gamma)\leq g$.
It is well known that $\LR(e,f,g)$ is a finitely generated semigroup
of $\NN^{e+f+g}$.\\

\subsubsection{Branching  semigroups}\label{sec:bsg}
Let $G$ be a connected reductive subgroup of a complex connected
reductive group $\hat{G}$.
Fix maximal tori $T\subset\hT$ and Borel subgroups $B\supset T$ and
$\hB\supset \hT$ of $G$ and $\hG$.
Let $X(T)$ denote the group of characters of $T$ and let $X(T)^+$
denote the set of dominant characters.
The irreducible representation of highest weight $\nu\in X(T)^+$ is denoted by $V_\nu$.
Similarly, we use the notation $X(\hat T)$,  $X(\hat T)^+$,  $V_{\hat\nu}$
relatively to $\hat G$.
The subspace of $G$-fixed vectors of the $G$-module $V$ is denoted by $V^G$.
Set
\begin{eqnarray}
 \label{eq:defc}
 c_{\nu\,\hat\nu}=\dim(V_\nu^*\otimes V_{\hat\nu})^G.
\end{eqnarray}
The branching problem is equivalent to the knowledge of these coefficients
since 
\begin{eqnarray}
  \label{eq:2}
  V_{\hat\nu}=\sum_{\nu\in X(T)^+}c_{\nu\,\hat\nu} V_\nu,
\end{eqnarray}
as a $G$-module.
Consider the set
$$
\LR(G,\hat G)=\{(\nu,\hat \nu)\in X(T)^+\times X(\hat T)^+\;:\;
c_{\nu\,\hat\nu}\neq 0\}.
$$
By a result of  Brion and Knop (see \cite{elash}),  $\LR(G,\hat G)$ is a finitely generated semigroup.

\subsubsection{GIT semigroups}\label{sec:gitsg}
Let $G$ be a complex reductive group acting on an irreducible
projective variety $X$.
Let $\Pic^G(X)$ denote the group of $G$-linearized line bundles on $X$.
The space $\Ho^0(X,\Li)$ of regular sections of $\Li$ is a $G$-module.
Consider the set
\begin{eqnarray}
 \label{eq:defLRX}
 \LR(G,X)=\{\Li\in\Pic^G(X)\;:\;
\Ho^0(X,\Li)^G\neq\{0\}\}.
\end{eqnarray}
Since $X$ is irreducible, the product of two nonzero $G$-invariant
sections is a nonzero $G$-invariant section and
$\LR(G,X)$ is a semigroup.

\subsection{Relations between these semigroups}

\subsubsection{Kronecker semigroups as branching semigroups.}
Let $E$ and $F$ be two complex vector spaces of dimension $e$ and $f$.
Consider the group $G=\GL(E)\times\GL(F)$.
Using Schur-Weyl duality, the Kronecker coefficient $g_{\alpha\,\beta\,\gamma}$ can be
interpreted in terms of representations of $G$. Namely  (see for
example \cite{Macdo,FH}) $g_{\alpha\,\beta\,\gamma}$ is the
multiplicity of $S^\alpha E\otimes S^\beta F$ in 
$S^\gamma(E\otimes F)$. 
More precisely, let $\gamma$ be a partition such that $l(\gamma)\leq
ef$. Then the simple $\GL(E\otimes F)$-module  $S^\gamma(E\otimes F)$ decomposes as a sum of simple $G$-modules as follows
\begin{eqnarray}
  \label{eq:KronGL}
  S^\gamma(E\otimes F)=\sum_{
    \begin{array}{c}
  \mbox{partitions }\alpha,\,\beta\mbox{ s.t.}\\
l(\alpha)\leq e,\,l(\beta)\leq f     
    \end{array}}
  g_{\alpha\,\beta\,\gamma}\;
S^\alpha E\otimes S^\beta F.
\end{eqnarray}
As a consequence
\begin{eqnarray}
  \label{eq:KronGL2}
  \Kron(e,f,ef)=\LR(\GL(E)\times \GL(F), \GL(E\otimes F))\cap (\ZZ^e\times  \ZZ^f\times(\NN)^{ef}). 
\end{eqnarray}

\subsubsection{Littlewood-Richardson semigroups as branching
  semigroups}
Since the Littlewood-Richardson coefficients are multiplicities for the tensor
product decomposition of $\GL_n$, we have
\begin{eqnarray}
  \label{eq:3}
\LR(e,e,e)=\LR(\GL_e, \GL_e\times \GL_e)\cap ((\NN)^e)^3.
\end{eqnarray}

\bigskip
The Littlewood-Richardson coefficients have another interpretation in
terms of representations of linear groups. Consider the embedding of
$\GL(E)\times\GL(F)$ in $\GL(E\oplus F)$ as a Levi subgroup by its
natural action on $E\oplus F$.
Then (see\cite[Chapter I, 5.9]{Macdo}) 
\begin{eqnarray}
  \label{eq:LRCoefLevi}
S^{\gamma}(E\oplus F)=\sum_{
    \begin{array}{c}
\mbox{partitions }\alpha,\,\beta\mbox{ s.t.}\\
l(\alpha)\leq e,\,l(\beta)\leq f     
    \end{array}} c_{\alpha\beta}^\gamma S^{\alpha}E\otimes S^{\beta}F.
\end{eqnarray}
In particular
\begin{eqnarray}
  \label{eq:34}
\LR(e,f,e+f)=\LR(\GL_e\times\GL_f, \GL_{e+f})\cap
(\ZZ^e\times\ZZ^f\times (\NN)^{e+f}).
\end{eqnarray}

\subsubsection{Branching semigroups as GIT
  semigroups}
  We use notation of Section~\ref{sec:bsg} and we assume that $G$ and
  $\hG$ are semisimple simply connected. 
  Consider the diagonal action of $G$ on $X=G/B\times \hG/\hB$. 
  Note that $\Pic^G(X)$ identifies with $X(T)\times X(\hT)$. 
  Then Borel-Weyl's theorem implies that 
  $\LR(G,\hG)=\LR(G,X)$.

  \subsubsection{Kronecker semigroups as GIT  semigroups}\label{sec:krongit}
  If $V$ is a complex finite dimensional vector space,  let  $\Fl(V)$ denote the variety of complete flags of $V$.
Given integers $a_i$ such that $1\leq a_1<\cdots<a_s\leq \dim (V)-1$,
we denote by   $\Fl(a_1,\cdots,a_s;\,V)$ the variety of flags
$V_1\subset\cdots\subset V_s\subset V$ such that $\dim(V_i)=a_i$ for
any $i$.
If $\alpha$ is a partition with at most dim$(V)$ parts then
$\Li_\alpha$ (resp. $\Li^\alpha$) denotes the $\GL(V)$-linearized line bundle on 
$\Fl(V)$ such that the space $\Ho^0(\Fl(V),\Li_\alpha)$ (resp.  $\Ho^0(\Fl(V),\Li^\alpha)$)
is isomorphic to $S^\alpha V^*$
(resp. $S^\alpha V$) as a $\GL(V )$-module.

Assume that  $E$ and $F$ are two linear spaces of dimension $e+1$ and $f+1$.
Set $G=\GL(E)\times \GL(F)$.  
Consider the variety
$$
X=\Fl(E)\times\Fl(F)\times\Fl(1,\cdots, e+f+1; E\otimes F)
$$
endowed with its natural $G$-action.
Let $\alpha$, $\beta$, and $\gamma$ be three partitions such that 
$l(\alpha)\leq e+1$,   $l(\beta)\leq f+1$, and   $l(\gamma)\leq e+f+1$.   
Consider the $\GL(E)$-linearized line bundle $\Li^\alpha$ on $\Fl(E)$,
 and respectively $\Li^\beta$ on $\Fl(F)$.
Since $l(\gamma)\leq e+f+1$, the line bundle $\Li_\gamma$ on
$\Fl(E\otimes F)$ is the pullback of a line bundle (still denoted by
$\Li_\gamma$) on 
$\Fl(1,\cdots, e+f+1; E\otimes F)$.
Consider the line bundle
$\Li=\Li^\alpha\otimes\Li^\beta\otimes\Li_\gamma$ on $X$ endowed with
its natural $G$-action.
Then 
$$
\Ho^0(X,\Li)\simeq S^\alpha E\otimes S^\beta F\otimes S^\gamma(E\otimes F)^*,
$$
and, by the formula~\eqref{eq:KronGL},
\begin{eqnarray}
  \label{eq:10}
  g_{\alpha\,\beta\,\gamma}=\dim(\Ho^0(X,\Li)^G).
\end{eqnarray}
The map $(\alpha,\beta,\gamma)\mapsto
\Li^\alpha\otimes\Li^\beta\otimes\Li_\gamma$ extends to a linear
isomorphism from
$\ZZ^{2e+2f+3}$ onto $\Pic^G(X)$.
This isomorphism allows to identify $\LR(G,X)$ with a
subset of $\ZZ^{2e+2f+3}$.
The equality~\eqref{eq:10} implies that
$$
%\QQ_{\geq 0}
\Kron(e+1,f+1,e+f+1)=(\ZZ_{\geq 0})^{2e+2f+3}\cap \LR(G,X).
$$

\section{Descriptions of branching and GIT cones}
\label{sec:GITEigen}  

  \subsection{GIT cones}
  
  Assume that the connected reductive group $G$ acts on the smooth projective
  variety $X$ and that $\Pic^G(X)$ has finite rank.
  Consider the cone $\QQ_{\geq 0}\LR(G,X)$ generated in $\Pic^G(X)\otimes\QQ$ 
  by the points of $\LR(G,X)$.
  The $G$-linearized ample line bundles on $X$ generated an open convex cone
  $\Pic^G(X)^+_\QQ$ in  $\Pic^G(X)\otimes\QQ$. 
  In this section, we recall from \cite{GITEigen} a description of the faces of  $\QQ_{\geq 0}\LR(G,X)$ that intersect $\Pic^G(X)^+_\QQ$.
  
  \bigskip Let $\Li$ be a $G$-linearized line bundle on $X$.
  Consider the associated set of  semistable points 
$$
X^{\rm ss}(\Li)=\{x\in X\;:\;\exists k>0\mbox{ and } \sigma\in
\Ho^0(X,\Li^{\otimes k})^G\qquad \sigma(x)\neq 0\}.
$$
Assume that $X^{\rm ss}(\Li)$ is nonempty. Then
  the projective variety $\Proj(\bigoplus_{k\geq 0}\Ho^0(X,\Li^{\otimes k})^G)$ is denoted by 
  $X^{\rm ss}(\Li)
  \quot G$.
  For later use, observe that $\dim(\Ho^0(X,\Li^{\otimes k})^G)$ is 
  $O(k^{\dim(X^{\rm ss}(\Li)  \quot G)})$.
If moreover $\Li$ is ample, 
$X^{\rm ss}(\Li)  \quot G$ is the categorical quotient of 
$X^{\rm ss}(\Li)$ by $G$.
In general, there is a  canonical $G$-invariant regular map
\begin{equation}
  \label{eq:defpi}
  \pi\,:\, X^{\rm ss}(\Li)\longto X^{\rm ss}(\Li)  \quot G.
\end{equation}

  Let $\lambda$ be a one parameter subgroup of $G$.
  The set $$
P(\lambda)=\{g\in G\,:\,\lim_{t\to 0}\lambda(t)g\lambda(t^{-1})\mbox{
 exists in } G\}
$$
is a parabolic subgroup of $G$. 
Consider an irreducible component $C$ of the fixed point set $X^\lambda$
of $\lambda$ in $X$. Set
$$
C^+=\{x\in X\,:\,\lim_{t\to 0}\lambda(t)x\in C\}.
$$
By  Bia\l{}ynicki-Birula's theorem, 
$C^+$ is an irreducible smooth locally closed subvariety of $X$.
Moreover it is stable by the action of $P(\lambda)$.
Consider on $G\times C^+$ the following action of the group
$P(\lambda)$:
$$
p.(g,x)=(gp^{-1},px).
$$
There exists a quotient variety denoted by 
$G\times_{P(\lambda)}C^+$. 
We denote by $[g:x]$, the class of $(g,x)\in G\times C^+$.
The following formula
$$
h.[g:x]=[hg:x]\qquad \forall h\in G,
$$
endows $G\times_{P(\lambda)}C^+$ with a $G$-action.
Consider the $G$-equivariant morphism
$$
\begin{array}{cccc}
  \eta\,:&G\times_{P(\lambda)}  C^+&\longto&X\\
&[g:x]&\longmapsto&gx.
\end{array}
$$
The pair $(C,\lambda)$ is said to be {\it well covering} if there
  exists a $P(\lambda)$-stable open subset $\Omega$ of $C^+$ such that 
  \begin{enumerate}
  \item the restriction of $\eta$ to $G\times_{P(\lambda)}\Omega$ is
    an open immersion;
\item $\Omega$ intersects $C$. 
  \end{enumerate}

\bigskip
For any $\Li\in \Pic^G(X)$, there exists an integer 
$\mu^\Li(C,\lambda)$  such that
$$
\lambda(t)\tilde z=t^{-\mu^\Li(C,\lambda)}\tilde z,
$$
for any $t\in\CC^*$, $z\in C$  and $\tilde z$ in the fiber $\Li_z$ over $z$ in $\Li$.

\begin{theo}(see \cite{GITEigen})
\label{th:gitcone}
\begin{enumerate}
\item For any well covering pair $(C,\lambda)$ and any $\Li\in \LR(G,X)$, we have
$\mu^\Li(C,\lambda)\leq 0$.
\item 
For any face $\Face$ of $\QQ_{\geq 0}\LR(G,X)$ intersecting $\Pic^G(X)^+_\QQ$ there exists a well covering pair $(C,\lambda)$ such that
$(\Li\otimes 1)\in\Face$ if and only if $\mu^\Li(C,\lambda)=0$, for
any ample $\Li$ in $\lr(G,X)$.
\item Let $(C,\lambda)$  be a well covering pair and $\Li$ be ample in $\LR(G,X)$. 
Then $\mu^\Li(C,\lambda)=0$ if and only if $X^{\rm ss}(\Li)\cap C$ is not empty.
\end{enumerate}
\end{theo}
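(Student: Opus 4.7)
The strategy is to connect $G$-invariant sections of $\Li$ with the numerical invariant $\mu^\Li(C,\lambda)$ via the Hilbert--Mumford principle, and to use the well-covering hypothesis to turn inequalities into sharp face-cutting equalities. The underlying observation, common to all three parts, is this: if $\sigma\in\Ho^0(X,\Li^{\otimes k})^G$ is a $G$-invariant section, then the $G$-equivariance relation $\sigma(\lambda(t)x)=\lambda(t)\cdot\sigma(x)$ forces, for any $x\in C^+$ with $y=\lim_{t\to 0}\lambda(t)x\in C$, the existence of a $t\to 0$ limit in $\Li^{\otimes k}_y$ of $\lambda(t)\cdot\sigma(x)$; in a local $\lambda$-equivariant trivialization near $y$ this means all $\lambda$-weights occurring in $\sigma(x)$ are $\geq 0$, while the limit $\sigma(y)$ sits in the one-dimensional weight space of weight $-k\mu^\Li(C,\lambda)$ inside $\Li^{\otimes k}_y$.

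For (i), take any nonzero $\sigma\in\Ho^0(X,\Li^{\otimes k})^G$; well-coverage guarantees that $\Omega$ meets the non-vanishing locus of $\sigma$ (since $\eta$ is an open immersion on $G\times_{P(\lambda)}\Omega$ and $X$ is irreducible), so a general $x\in\Omega$ has $\sigma(x)\neq 0$, and the principle above forces $-k\mu^\Li(C,\lambda)\geq 0$. For (iii), the direction $(\Leftarrow)$ is immediate: $x\in C\cap X^{\rm ss}(\Li)$ is $\lambda$-fixed, a non-vanishing $G$-invariant section at $x$ must be a $\lambda$-weight vector with weight $-k\mu^\Li(C,\lambda)$, and combined with (i) this forces $\mu^\Li(C,\lambda)=0$. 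For the converse, $\Li$ ample and $\mu^\Li(C,\lambda)=0$ allow one to linearize $\Li|_\Omega$ trivially along $\lambda$; one then builds a $P(\lambda)$-invariant section on $\Omega$, induces it to a $G$-invariant section on $G\times_{P(\lambda)}\Omega$, and transports it through the open immersion $\eta$ to a $G$-invariant section of some $\Li^{\otimes k}$ not vanishing on $C\cap\Omega$.

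The heart of the theorem is (ii). Given a face $\Face$ of $\QQ_{\geq 0}\LR(G,X)$ meeting $\Pic^G(X)^+_\QQ$, pick an ample $\Li_0$ in the relative interior of $\Face$ and an ample $\Li_1$ lying just on the other side of $\Face$, so that $\Li_1\notin\LR(G,X)$. Because $X^{\rm ss}(\Li_1)\neq X$, the instability locus admits the Kempf--Ness--Hesselink stratification by adapted one-parameter subgroups. The plan is to select, using this stratification, a one-parameter subgroup $\lambda$ and an irreducible component $C$ of $X^\lambda$ such that: (a) $\mu^{\Li_0}(C,\lambda)=0$ (so that $C$ meets $X^{\rm ss}(\Li_0)$ by part (iii), in particular $C^+$ dominates an open subset of $X$), and (b) the linear functional $\Li\mapsto\mu^\Li(C,\lambda)$ on $\Pic^G(X)$ cuts out exactly $\Face$ in a neighborhood of $\Li_0$. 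Well-coverage is then checked by a dimension count: the generic $P(\lambda)$-orbit in $C^+$ must have the maximal possible dimension, which, together with Luna's \'etale slice theorem at a point of $C\cap X^{\rm ss}(\Li_0)$, upgrades the resulting dominant $G$-equivariant map $\eta$ to an open immersion on an appropriate $P(\lambda)$-stable $\Omega\subset C^+$.

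The principal obstacle is the construction in (ii): producing $\lambda$ so that the functional $\mu^\bullet(C,\lambda)$ supports precisely the given face (not a proper sub-face, and not a different face of the ambient cone $\Pic^G(X)^+_\QQ$), and simultaneously verifying the well-covering condition. This requires a uniform understanding of how the quotients $X^{\rm ss}(\Li)\quot G$ and their Kempf strata vary as $\Li$ crosses $\Face$ in the ample cone, and is the technical backbone of \cite{GITEigen}.
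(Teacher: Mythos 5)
The paper does not prove Theorem~\ref{th:gitcone}: it is quoted verbatim from \cite{GITEigen} with a citation and no argument, so there is no internal proof to compare your sketch against. Judged on its own terms, your outline correctly isolates the Hilbert--Mumford mechanism for part (i) and for the direction $(\Leftarrow)$ of part (iii): a nonvanishing $G$-invariant section $\sigma$ of $\Li^{\otimes k}$, pulled along a $\lambda$-flow line into $C$, forces the weight of $\lambda$ on the fiber over the limit point to be nonnegative, which with the paper's sign convention $\lambda(t)\tilde z=t^{-\mu^\Li(C,\lambda)}\tilde z$ gives $\mu^\Li(C,\lambda)\leq 0$; and a nonvanishing invariant section at a $\lambda$-fixed semistable point directly forces the fiber weight to vanish, so you do not even need to invoke (i) there.

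The substantive gaps are in (iii)$(\Rightarrow)$ and in (ii). For (iii)$(\Rightarrow)$ you propose to build a $P(\lambda)$-invariant section on $\Omega$, induce it to $G\times_{P(\lambda)}\Omega$, and push it through the open immersion $\eta$; but this only yields a $G$-invariant section on an open subset of $X$, and you give no mechanism (Hartogs-type extension, or taking sufficiently high powers of the ample $\Li$, or a direct comparison $\Ho^0(X,\Li^{\otimes k})^G\simeq\Ho^0(C,\Li^{\otimes k}|_C)^{G^\lambda}$, which is the route actually used in \cite{GITEigen} and invoked later in this paper as \cite[Theorem~2]{reduction}) for extending it to all of $X$, nor do you show that the resulting section is nonzero on $C$. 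For (ii), beyond the issue that a face of codimension $>1$ does not have a single ``other side'' in the ample cone, the decisive step --- producing $\lambda$ and $C$ from the Kempf--Ness--Hesselink stratification of $X^{\rm us}(\Li_1)$ so that $\mu^\bullet(C,\lambda)$ supports exactly $\Face$ and then verifying the well-covering condition via Luna's slice theorem --- is stated as a plan rather than carried out; as you yourself note, this is precisely the technical backbone of \cite{GITEigen}. So the proposal is a correct high-level reconstruction of the strategy but is not a self-contained proof, and since the paper defers entirely to \cite{GITEigen} there is no paper-level argument it could be said to reproduce or diverge from.
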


\subsection{Branching cones}
With notation of Section~\ref{sec:bsg}, we want to describe the cone 
$\QQ_{\geq 0}\LR(G,\hG)$ generated by $\LR(G,\hG)$. 
We assume that  no nonzero  ideal of the Lie algebra $\Lie(G)$ of 
$G$ is an ideal of that $\Lie(\hG)$ of $\hG$:
this assumption implies that the cone $\QQ_{\geq 0}\LR(G,\hG)$ has nonempty interior in $(X(T)\times X(\hat T))\otimes\QQ$.

Consider the natural pairing  $\langle\cdot\,,\,\cdot\rangle$ between the one parameter subgroups and 
the characters of tori $T$ or $\hT$.
Let $W$ (resp. $\hW$) denote the Weyl group of $T$ (resp. $\hT$).
If $\lambda$ is a one parameter subgroup of $T$ (and thus of $\hT$), 
we denote by $W_\lambda$ (resp. $\hW_\lambda$) the stabilizer of $\lambda$ for the natural action
of the Weyl group.

The cohomology group ${\rm H}^*(G/P(\lambda),\ZZ)$ is freely generated by the Schubert classes
$\sigma_w$ parameterized by the  elements $w\in W/W_\lambda$.
Assume that $\lambda$ is dominant.
Let $w_0$ be the longest element of $W$. 
If $w\in W/W_\lambda$, we denote by $w^\vee\in W/W_\lambda$ the class of 
$w_0w$. By this way $\sigma_{w^\vee}$ and $\sigma_w$ are Poincar\'e dual. 
We  consider $\hG/\hP(\lambda)$, $\sigma_\hw$ as above but with $\hG$ in 
place of $G$.
Consider also the canonical $G$-equivariant immersion 
$\iota\,:\,G/P(\lambda)\longto\hG/\hP(\lambda)$; and the corresponding morphism  $\iota^*$
in cohomology. 

Recall from \cite{RR}, the definition of Levi-movability for  the pair 
$(\sigma_w,\sigma_\hw)$. For the purpose of this paper it is only useful to known that
if $(\sigma_w,\sigma_\hw)$ is Levi-movable then $\iota^*(\sigma_\hw).\sigma_w$ is a nonzero multiple of the class $[pt]$
of the point. Moreover the converse is true if  $\hG/\hP(\lambda)$
is minuscule.

Consider the set $\Wt_T(\hlg/\lg)$ of  nontrivial weights of $T$ in $\hlg/\lg$ and the set of  hyperplanes $H$ of $X(T)\otimes \QQ$ spanned by some
elements of $\Wt_T(\hlg/\lg)$.
For each such hyperplane $H$ there exist exactly two opposite indivisible one parameter subgroups 
$\pm\lambda_H$ which are orthogonal (for the paring $\langle\cdot,\cdot\rangle$) to $H$.
The so obtained one parameter subgroups are called
 {\it admissible} and   form a $W$-stable set.

\begin{theo}(see \cite{GITEigen})\label{th:gitEigen}

    Recall that no nonzero ideal of $\lg$ is an ideal of $\hlg$. 
Then, the cone $\lr(G,\hG)$ has nonempty interior  in $X(T\times\hT)\otimes \QQ$.

A dominant weight $(\nu,\hnu)$ belongs to  $\lr(G,\hG)$ if and only if
\begin{eqnarray}
  \label{eq:ineg}
%\varphi_{w,\hw,\lambda}(\nu,\hnu)=
\langle\hw\lambda,\hnu\rangle  \leq \langle w\lambda, \nu \rangle
\end{eqnarray}
for any dominant admissible one parameter subgroup $\lambda$ of $T$ 
and for any pair $(w,\hw)\in W/W_{\lambda}\times \hW/\hW_{\lambda}$ 
such that 
 
\begin{enumerate}
\item \label{cond:BeSj1}
$\iota^*(\sigma_\hw)\cdot\sigma_{w^\vee}=[pt]\in {\rm H}^*(G/P(\lambda),\ZZ)$, and 
\item  \label{cond:BeSj2}
the pair $(\sigma_{w^\vee},\sigma_\hw)$ is Levi-movable.
\end{enumerate}

Moreover, the inequalities~\eqref{eq:ineg} are pairwise distinct and no one can be omitted. 
\end{theo}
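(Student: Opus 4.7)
The plan is to realise the branching cone as a GIT cone and invoke Theorem~\ref{th:gitcone}. By Borel--Weil, the semigroup $\LR(G,\hG)$ coincides with $\LR(G,X)$ for $X=G/B\times\hG/\hB$, as already noted in Section~\ref{sec:bsg}. The hypothesis that no nonzero ideal of $\lg$ lies in $\hlg$ guarantees that $G$ acts with finite generic stabiliser on $X$, so that $\QQ_{\geq 0}\LR(G,\hG)$ has nonempty interior in $X(T\times\hT)\otimes\QQ$ and meets the ample cone. Theorem~\ref{th:gitcone} then puts the codimension-one faces of this cone in bijection with well-covering pairs $(C,\lambda)$, and asserts that any $\Li\in\LR(G,X)$ satisfies $\mu^\Li(C,\lambda)\leq 0$, with equality exactly on the corresponding face.

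I would next parameterise these well-covering pairs. After a $G$-conjugation one may assume $\lambda$ is a dominant one-parameter subgroup of $T$, so that $P(\lambda)$ and $\hP(\lambda)$ are standard parabolics. The irreducible components of $X^\lambda$ are then the products $C_{w,\hw}=L(\lambda)wB/B\times\hat L(\lambda)\hw\hB/\hB$ indexed by $(w,\hw)\in W/W_\lambda\times\hW/\hW_\lambda$, and the corresponding Bia{\l}ynicki--Birula strata $C_{w,\hw}^+$ are the products of Schubert cells $P(\lambda)wB/B\times\hP(\lambda)\hw\hB/\hB$. A tangent space computation and a dimension count at a general point of $C_{w,\hw}$ show that $(C_{w,\hw},\lambda)$ is well-covering exactly when the map $\eta\colon G\times_{P(\lambda)}C_{w,\hw}^+\to X$ is birational with dense image. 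This splits into condition~\ref{cond:BeSj1}, which ensures that the image has the right intersection number (a point class in cohomology), together with condition~\ref{cond:BeSj2}, which via the Levi-movability result of \cite{RR} upgrades the numerical statement to genuine birationality by moving the two Schubert subvarieties into transverse position through the Levi of $P(\lambda)$.

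A direct weight computation on the fibre of the line bundle associated with $(\nu,\hnu)$ at the $T$-fixed point $(wB/B,\hw\hB/\hB)$ yields $\mu^\Li(C_{w,\hw},\lambda)=\langle\hw\lambda,\hnu\rangle-\langle w\lambda,\nu\rangle$, the minus sign coming from the appearance of $V_\nu^*$ in~\eqref{eq:defc}, so that $\mu\leq 0$ is precisely the inequality~\eqref{eq:ineg}. For the non-redundancy statement I would argue that only admissible $\lambda$ can support a codimension-one face: if $\lambda$ is not orthogonal to any hyperplane spanned by weights of $\Wt_T(\hlg/\lg)$, then perturbing $\lambda$ inside its Weyl chamber preserves $P(\lambda)$, the fixed-point decomposition and the function $\mu^\Li(C_{w,\hw},\cdot)$ up to scale, forcing the face to have codimension at least two. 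Conversely, admissibility pins down $\lambda$ from the hyperplane spanned by the face, and the Schubert data $(w,\hw)$ are then determined by the coefficients of the linear form, yielding pairwise distinctness.

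The main obstacle is the equivalence recorded in the second paragraph between the geometric well-covering hypothesis on $(C_{w,\hw},\lambda)$ and the combination of the cohomological conditions~\ref{cond:BeSj1} and~\ref{cond:BeSj2}. The point-class condition alone controls the intersection numerically, while well-coveringness is a scheme-theoretic assertion about the generic fibre of $\eta$; bridging this gap is precisely the role of Levi-movability and constitutes the technical heart of the argument of \cite{GITEigen}.
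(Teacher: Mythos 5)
The paper does not prove this theorem: it is recalled from \cite{GITEigen} with no internal argument, so there is no in-paper proof to compare against. Your sketch is a reasonable reconstruction of the strategy of that reference: realise the branching semigroup as the GIT semigroup on $G/B\times\hG/\hB$, apply Theorem~\ref{th:gitcone}, index the fixed-point components and Bia\l{}ynicki-Birula strata by $W/W_\lambda\times\hW/\hW_\lambda$, translate well-coveringness into conditions~\ref{cond:BeSj1} and~\ref{cond:BeSj2} via the Levi-movability theory of \cite{RR}, and compute $\mu^\Li$ at a $T$-fixed point to get~\eqref{eq:ineg}. You also rightly flag the equivalence between well-coveringness and the two cohomological conditions as the technical heart of \cite{GITEigen}. (One small slip: the hypothesis is that no nonzero ideal of $\lg$ \emph{is an ideal of} $\hlg$, not that no such ideal ``lies in'' $\hlg$ --- the latter is automatic since $\lg\subset\hlg$.)

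One step in the irredundancy discussion is mis-stated and, as written, would not give the conclusion. You claim that perturbing a non-admissible $\lambda$ ``preserves $\mu^\Li(C_{w,\hw},\cdot)$ up to scale, forcing the face to have codimension at least two.'' These two clauses pull in opposite directions: if $\mu^\Li(C,\lambda')$ really were a scalar multiple of $\mu^\Li(C,\lambda)$ for all nearby $\lambda'$, the supporting hyperplane would be rigid and the face could perfectly well have codimension one. The actual mechanism is that $\mu^\Li(C,\lambda)$ is \emph{linear} in $\lambda$; for a perturbation direction $\mu$ such that $(C,\lambda\pm\epsilon\mu)$ remain well covering (which holds for small generic perturbations since $P(\lambda)$, $C$ and $C^+$ are unchanged), both inequalities $\mu^\Li(C,\lambda\pm\epsilon\mu)\leq 0$ hold on the cone, and together with $\mu^\Li(C,\lambda)=0$ on the putative face they force $\mu^\Li(C,\mu)=0$ there as well. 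When $\lambda$ lies in a top-dimensional region of the arrangement of hyperplanes spanned by $\Wt_T(\hlg/\lg)$, this applies to a full-dimensional family of directions $\mu$ and pins the face into an intersection of independent hyperplanes, hence codimension at least two. Admissibility is precisely what confines the available perturbations to the single line $\QQ\lambda$ and thereby permits a codimension-one face.
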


\section{Description of $\LR(e,f,e+f)$}
\label{sec:Horn}

\subsection{The statement}

\begin{theo}
\label{th:LR}
  Let $\alpha,\,\beta,$ and $\gamma$ be three partitions such that
  $l(\alpha)\leq e$, $l(\beta)\leq f$ and $l(\gamma)\leq e+f$.

Then $c_{\alpha\,\beta}^\gamma\neq 0$ if and only if
\begin{eqnarray}
  \label{eq:6}
  |\alpha|+|\beta|=|\gamma|,
\end{eqnarray}
and
\begin{eqnarray}
  \label{eq:93}
  \gamma_{f+i}\leq \alpha_i\leq\gamma_i,\quad \gamma_{e+j}\leq \beta_j\leq \gamma_j,
\end{eqnarray}
for any $i\in\{1,\dots,e\}$ and $j\in\{1,\dots,f\}$, and 
\begin{eqnarray}
  \label{eq:138}
 |\gamma_K|\leq   |\alpha_I|+|\beta_J|,
\end{eqnarray}
for any $0<r<e$ and $0<s<f$, for any 
$I\in\Part(r,e)$, $J\in\Part(s,f)$ and $K\in\Part(r+s,e+f)$ such that
\begin{eqnarray}
  \label{eq:7}
  c_{\tau^I\,\tau^J}^{\tau^K}=1.
\end{eqnarray}
Moreover,  the inequalities~\eqref{eq:93} or \eqref{eq:138} are pairwise distinct and no one can be omitted.  
\end{theo}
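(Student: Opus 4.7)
The plan is to derive Theorem~\ref{th:LR} from the GIT cone description of Theorem~\ref{th:gitEigen}, applied to the Levi embedding $G=\GL(E)\times\GL(F)\hookrightarrow \hG=\GL(E\oplus F)$ which, by~\eqref{eq:34}, realises the LR-coefficients as branching multiplicities. Since $\lg$ and $\hlg$ share a diagonal copy of $\CC$ the hypothesis of Theorem~\ref{th:gitEigen} fails as stated; I would therefore pass to the quotients $G'=G/\CC^*$ and $\hG'=\hG/\CC^*=\mathrm{PGL}(E\oplus F)$, observing that the diagonal central character produces exactly the equality $|\alpha|+|\beta|=|\gamma|$ of~\eqref{eq:6} and that $\LR(e,f,e+f)$ identifies with $\LR(G,\hG)\cap\NN^{2e+2f}$.

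The main step is the classification of dominant admissible one-parameter subgroups of $T/\CC^*$. Since $\hlg/\lg\cong(E\otimes F^*)\oplus(F\otimes E^*)$ the nonzero weights of $T$ are $\pm(\epsilon_i-\delta_j)$, and a subset of them spans a hyperplane in $X(T/\CC^*)_\QQ$ exactly when its underlying bipartite graph on $\{1,\dots,e\}\sqcup\{1,\dots,f\}$ either (A) covers all $e+f$ vertices and has exactly two connected components, or (B) has a single isolated vertex and a connected remainder on the remaining $e+f-1$ vertices. Reducing modulo the Weyl group $S_e\times S_f$ and taking dominant representatives of both indivisible OPS $\pm\lambda_H$, case (A) produces the OPS $\lambda_{r,s}=(1^r,0^{e-r};1^s,0^{f-s})$ for $0<r<e$, $0<s<f$, while case (B) produces OPS supported on a single coordinate of $E$ or $F$; both sign choices in case (B) will be needed to produce both the lower and the upper members of each interlacing pair in~\eqref{eq:93}.

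For $\lambda_{r,s}$ the flag varieties are $G/P(\lambda_{r,s})=\mathrm{Gr}(r,e)\times\mathrm{Gr}(s,f)$ and $\hG/\hP(\lambda_{r,s})=\mathrm{Gr}(r+s,e+f)$, with $\iota$ the direct-sum map $(V,W)\mapsto V\oplus W$. All three Grassmannians are minuscule, so Levi-movability for the pair $(\sigma_{w^\vee},\sigma_\hw)$ is equivalent to $\iota^*(\sigma_\hw)\cdot\sigma_{w^\vee}=[pt]$; combining this with the classical Grassmannian branching formula $\iota^*(\sigma_{\tau^K})=\sum_{I,J}c_{\tau^I\tau^J}^{\tau^K}\,\sigma_{\tau^I}\otimes\sigma_{\tau^J}$ and Poincar\'e duality on each factor, conditions~\eqref{cond:BeSj1}--\eqref{cond:BeSj2} of Theorem~\ref{th:gitEigen} turn into the single requirement $c_{\tau^I\tau^J}^{\tau^K}=1$ of~\eqref{eq:7}. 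Pairing the Weyl translates of $\lambda_{r,s}$ against $\nu=(\alpha,\beta)$ and $\hnu=\gamma$ evaluates to $|\alpha_I|+|\beta_J|$ and $|\gamma_K|$ respectively, so the master inequality~\eqref{eq:ineg} becomes exactly~\eqref{eq:138}. An analogous and easier computation for case (B), in which the flag varieties collapse to $\PP^{e-1}$ (or $\PP^{f-1}$) and $\PP^{e+f-1}$ with $\iota$ a linear inclusion, recovers the four families of interlacing inequalities~\eqref{eq:93}.

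The final clause --- that the listed inequalities are pairwise distinct and no one can be omitted --- transfers from the corresponding minimality statement of Theorem~\ref{th:gitEigen}, granted the bijection between admissible triples $(\lambda,w,\hw)$ satisfying the cohomology and Levi-movability conditions and the triples $(I,J,K)$ of the statement (together with the interlacing indices). The main obstacle will be the bipartite-graph classification of admissible OPS and the bookkeeping needed to match dominant representatives of $\pm\lambda_H$ with the correct inequalities, so that each interlacing inequality and each Horn inequality (together with its complementary partner, which is equivalent on the hyperplane $|\alpha|+|\beta|=|\gamma|$) appears exactly once in the enumeration.
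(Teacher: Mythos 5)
Your proposal follows essentially the same route as the paper: realize the LR-coefficients via the Levi embedding, kill the common central $\CC^*$ (you pass to $\mathrm{PGL}$, the paper uses $\hG=\SL(E\oplus F)$ and $G=S(\GL(E)\times\GL(F))$ — a cosmetic difference), classify the admissible dominant one-parameter subgroups, and read off the inequalities from Theorem~\ref{th:gitEigen}, with minuscularity of the Grassmannians making Levi-movability automatic. Your bipartite-graph criterion (two components covering everything, or a single isolated vertex with a connected remainder) is a clean repackaging of the paper's direct argument that the exponents $a_i,b_j$ can take at most two values, and it does yield exactly $\lambda_{r,s}$ for $0<r<e$, $0<s<f$ together with the four exceptional pairs $(1,0),(0,1),(e-1,f),(e,f-1)$. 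The reduction of the final minimality clause to the minimality in Theorem~\ref{th:gitEigen} also matches the paper.

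However, there is one genuine gap. Theorem~\ref{th:gitEigen} describes the \emph{cone} $\QQ_{\geq 0}\LR(G,\hG)$, whereas Theorem~\ref{th:LR} asserts an equivalence with $c_{\alpha\beta}^\gamma\neq 0$, a statement about the \emph{semigroup}. Your argument shows the listed inequalities cut out the cone, which gives ``$c_{\alpha\beta}^\gamma\neq 0\Rightarrow$ inequalities'' and ``inequalities $\Rightarrow c_{n\alpha\,n\beta}^{n\gamma}\neq 0$ for some $n>0$''; to conclude $c_{\alpha\beta}^\gamma\neq 0$ itself, you must invoke the Knutson--Tao saturation theorem, which the paper does as the very first step of its proof. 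Without it, the ``if'' direction of the equivalence is not established. Add a sentence citing saturation and the argument is complete.
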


\bigskip
%\begin{NB}
The partitions $\alpha$ and $\beta$ in the statement of Theorem~\ref{th:LR}  are also partitions of length at most $e+f$. Hence the nonvanishing of
$c_{\alpha\beta}^\gamma$ is equivalent to 
$(\alpha,\beta,\gamma)\in \lr(e+f,e+f,e+f)$.
But, by the classical Horn conjecture (see {\it e.g.} \cite{Fulton:survey}), this cone is
characterized by the inequalities
\begin{equation}
  \label{eq:81}
 |\gamma_{K'}|\geq |\alpha_{I'}|+|\beta_{J'}|
\end{equation}
where $\sharp I'=\sharp J'=\sharp K'$ and
\begin{equation}
  \label{eq:84}
  c_{\tau^{I'}\tau^{J'}}^{\tau^{K'}}=1.
\end{equation}
%the family of Horn inequalities. 
In some sense, Theorem~\ref{th:LR}  selects among the inequalities
\eqref{eq:81} those that remain essential when one imposes 
$l(\alpha)\leq e$ and $l(\beta)\leq f$.

Each inequality~\eqref{eq:138} has to be consequence of at least one
Horn inequality~\eqref{eq:81}. 
Indeed, by setting  $\tilde I=I_-\cup\{e+s+1,\dots,e+f\}$ and $\tilde
J=J_-\cup\{f+r+1,\dots,e+f\}$, 
one can check that, under the assumptions of Theorem~\ref{th:LR} and modulo the equality~\eqref{eq:6}, 
the inequality~\eqref{eq:138} is equivalent to 
\begin{eqnarray}
\label{eq:Horn}
|\gamma_{K_-}|\geq |\alpha_{\tilde I}|+|\beta_{\tilde J}|.
\end{eqnarray}
But $\sharp \tilde I=\sharp \tilde J=\sharp K_-=e+f-r-s$.
One can check that 
$$
c_{\tau^I\tau^J}^{\tau^K}=c_{\tau^{\tilde I}\tau^{\tilde J}}^{\tau^{K_-}}.
$$ 
Hence the assumption~\eqref{eq:7} implies that the
condition~\eqref{eq:Horn} is an Horn inequality \eqref{eq:81} for the
cone $\QQ_{\geq 0}\LR(e+f,e+f,e+f)$.
%\end{NB}

For the proof of Theorem~\ref{th:LR}, we need  to recall some notations and results on Schubert calculus on Grassmannians. 

\subsection{Schubert Calculus}
\label{sec:schubert}

Let $\Gr(r,n)$ be the Grassmann variety of $r$-dimensional linear subspaces
of  $V=\CC^n$.
Let $F_\bullet$: $\{0\}=F_0\subset F_1\subset 
F_2\subset\cdots\subset F_{n}=V$ be a complete flag   of $V$.
Let $I=\{i_1<\cdots<i_r\}\in\Part(r,n)$. 
The Schubert variety $X_I(F_\bullet)$ in $\Gr(r,n)$  is defined to be
$$
X_I(F_\bullet)=\{L\in\Gr(r,n)\,:\,
\dim(L\cap F_{i_j})\geq j {\rm\ for\ }1\leq j\leq r\}.
$$
The Poincar\'e dual of the homology class of $X_I(F_\bullet)$  is
denoted by $\sigma_I$.
It does not depend on $F_\bullet$.
The classes $\sigma_I$ form a $\ZZ$-basis of the cohomology ring of
$\Gr(r,n)$.
Recall from the introduction the definition of the partition $\tau^I$.
Then $\sigma_I$ has degree $2|\tau^I|$.
A first cohomological interpretation of the Littlewood-Richardson
coefficients is given by  the formula (see {\it e.g.} \cite{Manivel})
\begin{eqnarray}
  \label{eq:LRcohom1}
  \sigma_I.\sigma_J=\sum_{K\in\Part(r,n)}c_{\tau^I\,\tau^J}^{\tau^K} \sigma_K,
\end{eqnarray}
for any $I,\,J$ in $\Part(r,n)$.\\

Let $r$ and $s$  be two  integers such that $0<r<e$ and
$0<s<f$.
Fix an identification $\CC^{e+f}=\CC^e\oplus\CC^f$ and consider the
morphism
$$
\begin{array}{cccc}
  \phi_{r,s}\,:&\Gr(r,e)\times\Gr(s,f)&\longto&\Gr(r+s,e+f)\\
&(F,G)&\longmapsto&F\oplus G.
\end{array}
$$
The associated comorphism in cohomology is 
$$
\phi_{r,s}^*\,:\, \Ho^*(\Gr(r+s,e+f),\ZZ)\longto \Ho^*(\Gr(r,e)\times\Gr(s,f),\ZZ).
$$
By Kuneth's formula, the family 
$(\sigma_I\otimes\sigma_J)_{(I,J)\in\Part(r,e)\times\Part(s,f)}$ is a
basis of
$\Ho^*(\Gr(r,e)\times\Gr(s,f),\ZZ)$.
A second cohomological interpretation of the Littlewood-Richardson
coefficients is given by the formula 
\begin{eqnarray}
  \label{eq:LRcohom2}
\phi_{r,s}^*(\sigma_K)=\sum_{(I,J)\in\Part(r,e)\times\Part(s,f)}c_{\tau^I\,\tau^J}^{\tau^K} \;(\sigma_I\otimes\sigma_J),
\end{eqnarray}
for any $K\in\Part(r+s,e+f)$.

\subsection{Proof of Theorem~\ref{th:LR}}

  By the Knutson-Tao theorem of  saturation (see \cite{KT:saturation}),
  $c_{\alpha\,\beta}^\gamma\neq 0$ if and only if
  $(\alpha,\,\beta,\,\gamma)$ belongs to the cone $\QQ_{\geq
    0}\LR(e,f,e+f)$.
It remains to prove that the inequalities~\eqref{eq:93} and \eqref{eq:138}
characterize  the cone $\QQ_{\geq 0}\LR(e,f,e+f)$ in a minimal way.

\bigskip
Let us fix bases for the two vector spaces $E$ and $F$ of dimension $e$ and $f$.
Consider the group $\hat G=\SL(E\oplus F)$, its subgroup
$G=S(\GL(E)\times\GL(F))$ and on $E\oplus F$ the basis obtained by concatenating the bases
of $E$ and $F$.
Let $\hat T$ be the maximal torus of $\hat G$ consisting in diagonal 
matrices.
It is contained in $G$; set $T=\hT$.
Let $\hat B$ be the Borel subgroup of $\hat G$ consisting in upper
triangular matrices.
Set $B=\hat B\cap G$.
Let $\varepsilon_i$ be the character of $\hT$ mapping a matrix in $\hT$ to its 
$i^{\rm th}$ diagonal entry. Since $\sum_i\varepsilon_i=0$, $(\varepsilon_1,\dots,\varepsilon_{e+f-1})$ is a $\ZZ$-basis of $X(\hT)$.

Let $\alpha,\,\beta$, and $\gamma$ be three partitions of length less
or equal to $e$, $f$, and $e+f$.
The highest weight of the $\hG$-module $S^\gamma(E\oplus F)$ is
$\tilde\gamma=(\gamma_1-\gamma_{e+f})\varepsilon_1 +\cdots +
(\gamma_{e+f-1}-\gamma_{e+f})\varepsilon_{e+f-1} $.
The highest weight of the $G$-module $S^\alpha E\otimes S^\beta F$ is
$\widetilde{(\alpha,\beta)}=(\alpha_1-\beta_{f})\varepsilon_1 +\cdots +
(\alpha_e-\beta_{f})\varepsilon_{e}+ (\beta_1-\beta_{f})\varepsilon_{e+1} +\cdots +
(\beta_{f-1}-\beta_{f})\varepsilon_{e+f-1}$.
Then, by the formula~\eqref{eq:LRCoefLevi}
$$
\begin{array}{c@{\,}c@{\,}l}
(\alpha,\beta,\gamma)\in\LR(e,f,e+f)&\iff&
(S^\alpha E^*\otimes S^\beta F^*\otimes S^\gamma(E\oplus
F))^{\GL(E)\times \GL(F)}\neq 0,\\&\iff&
|\alpha|+|\beta|=|\gamma|
\\
&{\rm and}  &
(S^\alpha E^*\otimes S^\beta F^*\otimes S^\gamma(E\oplus
F))^G\neq 0,\\
&\iff&
|\alpha|+|\beta|=|\gamma|\\
&{\rm and}& 
(\widetilde{(\alpha,\beta)},\tilde \gamma)\in\LR(G,\hat G).
\end{array}
$$
In particular, to determine the inequalities for the cone $\QQ_{\geq 0}\LR(e,f,e+f)$, it is sufficient
to describe $\QQ_{\geq 0}\LR(G,\hG)$.
We do this using Theorem~\ref{th:gitEigen}.
The set of weights of $T$ acting on $\Lie(\hG)/\Lie(G)$ is  the 
set of weights of $T$ acting on $F^*\otimes E$ and their opposite.
Explicitly $\Wt_T(\Lie(\hG)/\Lie(G))=\pm\{\varepsilon_i-\varepsilon_{e+j}\,|\,
1\leq i\leq e\quad{\rm and}\quad
1\leq j\leq f\}$.
Let $(a_1,\dots,a_e,b_1,\dots,b_f)\in\ZZ^{e+f}$ be the exponents of a one parameter subgroup $\lambda$ of $T$; they satisfy $\sum_ia_i+\sum_j b_j=0$.
Then $\langle\lambda,\varepsilon_i-\varepsilon_{e+j}\rangle=0$
if and only if $a_i=b_j$. It follows that if $\lambda$ is admissible then the integers $a_i$ and $b_j$ 
take at most two values.
If moreover $\lambda$ is dominant then there exist integers $r,s$, and $c>d$ such that
$a_1=\cdots=a_r=b_1=\cdots=b_s=c$ and
$a_{r+1}=\cdots=a_e=b_{s+1}=\cdots=b_f=d$.
If moreover $\lambda$ is indivisible, $c=\frac{e+f-r-s}{(r+s)\wedge(e+f)}$ and 
$d=\frac{-r-s}{(r+s)\wedge(e+f)}$, where $\wedge$ denotes the gcd. 
Let $\lambda_{r,s}$ denote the so obtained one-parameter subgroup of $T$. 
Conversely, one easily checks that  $\lambda_{r,s}$ is an admissible dominant one-parameter subgroup of $T$, if $0<r<e$ and $0<s<f$ or if the pair $(r,s)$ is one of the four exceptional ones $\{
(1,0),(0,1),(e-1,f),(e,f-1)
\}$.

\bigskip
The inclusions $G/P(\lambda_{r,s})\subset \hG/\hP(\lambda_{r,s})$ associated to the four 
exceptional cases
are $\PP(E)\subset\PP(E\oplus F)$, $\PP(F)\subset\PP(E\oplus F)$,  $\PP(E^*)\subset\PP(E^*\oplus F^*)$ and $\PP(F^*)\subset\PP(E^*\oplus F^*)$. 
Consider $\PP(E)\subset\PP(E\oplus F)$. The restriction of $\sigma_{\{f+i\}}\in \Ho^*(\PP(E\oplus F),\ZZ)$ in  $\Ho^*(\PP(E),\ZZ)$ is  $\sigma_{\{i\}}$.
Then Theorem~\ref{th:gitEigen} implies that
$$
(e+f)\alpha_i-|\alpha|-|\beta|\geq (e+f)\gamma_{f+i}-|\gamma|.
$$
Modulo the identity~\eqref{eq:6}, this is equivalent to $\gamma_{f+i}\leq \alpha_i$.
Similarly, we get the three other inequalities~\eqref{eq:93}.

\bigskip
Fix now  $0<r<e$ and $0<s<f$.
The inclusion $G/P(\lambda_{r,s})\subset \hG/\hP(\lambda_{r,s})$ is the morphism $\phi_{r,s}$
defined in Section~\ref{sec:schubert}.
Consider  $\sigma_I\otimes\sigma_J\in \Ho^*(\Gr(r,e)\times\Gr(s,f),\ZZ)$
and $\sigma_K\in \Ho^*(\Gr(r+s,e+f),\ZZ)$ such that
$\phi_{r,s}^*(\sigma_K).(\sigma_I\otimes\sigma_J)^\vee=[pt]$.
Here the Levi movability is automatic since $\hG/\hP(\lambda_{r,s})$ is cominuscule. 
Modulo~\eqref{eq:6}, the inequality~\eqref{eq:ineg} of Theorem~\ref{th:gitEigen} corresponding to
$\sigma_I\otimes\sigma_J$ and $\sigma_K$ is the inequality~\eqref{eq:138}.
Then the theorem follows  from Theorem~\ref{th:gitEigen}.
\hfill $\square$

\subsection{Complement on stretched Littlewood-Richardson coefficients}

\begin{lemma}\label{lem:degLR}
 Let $\alpha,\,\beta,$ and $\gamma$ be three partitions such that
  $l(\alpha)\leq e$, $l(\beta)\leq f$ and $l(\gamma)\leq e+f$.

Then, the map  $n\longmapsto c_{n\alpha\,n\beta}^{n\gamma}$ is polynomial of degree not
greater than
$$
\binom{e} +\binom f +\binom{e+f}
%\frac{e(e-1)+f(f-1)+(e+f)(e+f-1)}{2}
-e^2-f^2+1,
$$
where $\binom{e} =\frac{e(e-1)}{2}$.
\end{lemma}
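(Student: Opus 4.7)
The plan is to realize $c_{n\alpha\,n\beta}^{n\gamma}$ as the dimension of a space of $G$-invariant sections on an explicit projective variety, and then combine polynomiality of stretched Littlewood-Richardson coefficients with the asymptotic growth estimate recorded in Section~\ref{sec:GITEigen}. Following the model of Section~\ref{sec:krongit} with $E\otimes F$ replaced by $E\oplus F$, let $G=\GL(E)\times\GL(F)$ (with $\dim E=e$ and $\dim F=f$) act diagonally on
$$
X = \Fl(E)\times\Fl(F)\times\Fl(E\oplus F),
$$
and set $\Li = \Li^\alpha\otimes\Li^\beta\otimes\Li_\gamma$. Then $\Ho^0(X,\Li)\simeq S^\alpha E\otimes S^\beta F\otimes S^\gamma(E\oplus F)^*$ and, by~\eqref{eq:LRCoefLevi}, $\dim\Ho^0(X,\Li^{\otimes n})^G = c_{n\alpha\,n\beta}^{n\gamma}$ for every $n\geq 0$.

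That the map $n\mapsto c_{n\alpha\,n\beta}^{n\gamma}$ is polynomial (and not merely quasi-polynomial) follows from the Knutson-Tao saturation theorem combined with the polynomial structure of the Hilbert function of the finitely generated graded invariant ring $\bigoplus_{n\geq 0}\Ho^0(X,\Li^{\otimes n})^G$. The estimate $\dim\Ho^0(X,\Li^{\otimes n})^G=O(n^{\dim(X^{\rm ss}(\Li)\quot G)})$ recorded in Section~\ref{sec:GITEigen} then bounds the degree of this polynomial by $\dim(X^{\rm ss}(\Li)\quot G)$.

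Now $\dim X=\binom{e}+\binom{f}+\binom{e+f}$ and $\dim G=e^2+f^2$. Writing
$$
\dim(X^{\rm ss}(\Li)\quot G) = \dim X^{\rm ss}(\Li)-\dim G+\dim(\text{generic stabilizer})\leq \dim X-\dim G+\dim(\text{generic stabilizer}),
$$
the claimed bound $\binom{e}+\binom{f}+\binom{e+f}-e^2-f^2+1=\dim X-\dim G+1$ reduces to showing that the generic $G$-stabilizer on $X$ is exactly one-dimensional. The diagonal scalar subgroup $\{(u\Id_E,u\Id_F):u\in\CC^*\}\subset G$ is central in $\GL(E\oplus F)$ and hence lies in every stabilizer, giving the inclusion $\geq 1$; the main obstacle is to rule out a larger generic stabilizer.

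I would dispatch this last step by transversality. The stabilizer under $G$ of a generic point of $\Fl(E)\times\Fl(F)$ is a Borel subgroup $B\subset G$, and the stabilizer in $\GL(E\oplus F)$ of a generic complete flag of $E\oplus F$ is a Borel $B'\subset\GL(E\oplus F)$. Two generic Borels of a reductive group intersect in a maximal torus; applied to $B$ and $B'$ inside $\GL(E\oplus F)$, a direct dimension count with $\dim B+\dim B'=\binom{e+1}+\binom{f+1}+\binom{e+f+1}$ and $\dim\GL(E\oplus F)=(e+f)^2$ shows that the expected codimension of $B\cap B'$ in $\GL(E\oplus F)$ matches the actual one, so $B\cap B'$ reduces to the common center $\CC^*\cdot\Id$, which is the one-dimensional subgroup above. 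Once this is established, $\dim(X^{\rm ss}(\Li)\quot G)\leq\binom{e}+\binom{f}+\binom{e+f}-e^2-f^2+1$ and the lemma follows.
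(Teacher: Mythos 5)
Your setup matches the paper's: both realize $c_{n\alpha\,n\beta}^{n\gamma}$ as $\dim \Ho^0(X,\Li^{\otimes n})^G$ with $X=\Fl(E)\times\Fl(F)\times\Fl(E\oplus F)$, bound the degree by $\dim(X^{\rm ss}(\Li)\quot G)$, and reduce to showing that the $G$-stabilizer of a (general) point of $X$ has dimension $1$. However, the transversality argument you use for this last step is not correct.

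You write that the expected codimension of $B\cap B'$ in $\GL(E\oplus F)$ ``matches the actual one,'' but the arithmetic does not bear this out. The naive expected dimension of the intersection is
$$
\dim B + \dim B' - \dim\GL(E\oplus F)
= \frac{e(e+1)}{2}+\frac{f(f+1)}{2}+\frac{(e+f)(e+f+1)}{2}-(e+f)^2
= e+f-ef,
$$
which equals $1-(e-1)(f-1)$ and is therefore nonpositive as soon as $e,f\geq 2$. Since $B\cap B'$ always contains the one-dimensional scalar subgroup $\{(u\Id_E,u\Id_F)\}$, the intersection is strictly larger than ``expected'': it is not transversal, and no dimension count of this kind can conclude that the generic stabilizer is exactly one-dimensional. (Also, $B$ is a Borel of $G$, a proper Levi of $\hat G$, not a Borel of $\hat G$; the ``two generic Borels meet in a torus'' fact does not apply to the pair $(B,B')$.) The paper closes exactly this gap by exhibiting an explicit point $x=(F_\bullet^E,F_\bullet^F,F_\bullet^{E\oplus F})$ -- standard flags on $E$ and $F$, and a carefully chosen flag on $E\oplus F$ built from sums $\eta_e+\zeta_f,\ \eta_e+\eta_{e-1}+\zeta_{f-1},\dots$ -- and verifying by hand that $\dim G_x=1$. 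You need such a concrete verification (or a different rigorous argument for the generic stabilizer) to complete the proof. As a minor secondary point, the paper deduces polynomiality (for all $n\geq 0$, not merely eventually) from \cite{DW:LRpol}; the Hilbert-function argument you sketch only yields quasi-polynomiality without further input, though this does not affect the degree bound.
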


\begin{proof}
Since $c_{\alpha\beta}^\gamma=c_{\beta\alpha}^\gamma$, we may assume that $e\leq f$.
By \cite{DW:LRpol}, the function $\NN\longto\NN$, $n\longmapsto c_{n\alpha\,n\beta}^{n\gamma}$ is polynomial.

Recall that $E$ and $F$ are complex vector spaces of dimension $e$ and $f$.
Set $G=\GL(E)\times \GL(F)$ and $X=\Fl(E)\times\Fl(F)\times\Fl(E\oplus F)$.
Consider on $X$ the line bundle $\Li=\Li^\alpha\otimes\Li^\beta\otimes\Li_\gamma$.
Since $c_{n\alpha\,n\beta}^{n\gamma}=\dim(
\Ho^0(X,\Li^{\otimes n})^{G})$,
the degree of $c_{n\alpha\,n\beta}^{n\gamma}$ is equal to the dimension of 
$X^{\rm ss}(\Li)\quot G$.

Consider the map $\pi$ defined in \eqref{eq:defpi}. 
By Chevalley Theorem, since $\pi$ is dominant, for any general $y\in
X^{\rm ss}(\Li)$, one has
$$
\dim \pi\inv(\pi(y))=\dim(X^{\rm ss}(\Li))-\dim(X^{\rm ss}(\Li)\quot G).
$$
But, $\pi$ is $G$-invariant and $\pi\inv(\pi(y))$ contains $G.y$. Then
$$
\dim \pi\inv(\pi(y))\geq \dim(G.y)=\dim(G)-\dim(G_y),
$$
where $G_y$ is the stabilizer of $y$ in $G$.
But, for any $x\in X$, we have $\dim(G.x)\leq \dim(G.y)$ and
$$
\dim(X^{\rm ss}(\Li)\quot G)\leq \dim(X)-\dim(G)+\dim(G_x).
$$

We now claim that there exists $x$ such that $\dim(G_x)=1$.
Then the  lemma follows.

We now prove the claim by constructing explicitly $x$, that is, defining complete flags of $E$,
$F$ and $E\oplus F$. 
Fix bases $(\eta_1,\dots,\eta_e)$ and $(\zeta_1,\dots,\zeta_f)$ of $E$ and $F$.
On $E$ and $F$, we consider the two standard flags $F_\bullet^E$ and $F_\bullet^F$ in these bases.
Consider on $E\oplus F$, the following base
$$
(\eta_e+\zeta_f,\eta_e+\eta_{e-1}+\zeta_{f-1},\dots,
\eta_e+\cdots+\eta_{1}+\zeta_{f-e+1},\eta_1+\zeta_{f-e},\dots,\eta_1+\zeta_1)
$$
and the associated flag $F_\bullet^{E\oplus F}$.
One easily checks that $x=(F_\bullet^E,F_\bullet^F,F_\bullet^{E\oplus F})$ works.
\end{proof}

\section{Faces of $\QQ_{\geq 0}\Kron(e+1,f+1,e+f+1)$}

\subsection{Murnaghan's face}

The cone $\QQ_{\geq 0}\Kron(e+1,f+1,e+f+1)$ is contained in the
linear subspace  of points $(\alpha,\beta,\gamma)\in\QQ^{e+1}\times
\QQ^{f+1}\times \QQ^{e+f+1}$
that satisfy $|\alpha|=|\beta|=|\gamma|$.
In particular its dimension is at most $2e+2f+1$.

Recall that  $\bar\alpha=(\alpha_2\geq \alpha_3\cdots)$, 
if $\alpha=(\alpha_1\geq \alpha_2\cdots)$. 
 By Proposition~\ref{prop:Murna}, the points $(\alpha,\beta,\gamma)$ in $\QQ_{\geq
  0}\Kron(e+1,f+1,e+f+1)$ satisfy 
\begin{eqnarray}
  \label{eq:44}
  |\bar \alpha|+|\bar \beta|\geq |\bar\gamma|.
\end{eqnarray}
The set of points of  $\QQ_{\geq
  0}\Kron(e+1,f+1,e+f+1)$ such that equality holds in
the inequality~\eqref{eq:44} is a face $\Face^M$ ($M$ stands for Murnaghan) of the cone $\QQ_{\geq
  0}\Kron(e+1,f+1,e+f+1)$.
Consider the linear map
$$
\begin{array}{cccc}
  \pi\,:&\QQ^{2e+2f+3}&\longto&\QQ^{2e+2f}\\
&(\alpha,\beta,\gamma)&\longmapsto&(\bar\alpha,\bar\beta,\bar\gamma).
\end{array}
$$

\begin{lemma}
 \label{lem:FM}
The face $\Face^{M}$ 
maps by $\pi$ to $\QQ_{\geq 0}\LR(e,f,e+f)$.
Moreover  each fiber of $\pi$ over $\QQ_{\geq 0}\LR(e,f,e+f)$ contains
an unbounded interval.

The cone  $\QQ_{\geq
  0}\Kron(e+1,f+1,e+f+1)$  has dimension  $2e+2f+1$ and the face $\Face^{M}$ has dimension $2e+2f$. 
\end{lemma}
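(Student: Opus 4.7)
The plan is to leverage Proposition~\ref{prop:Murna}(ii), which identifies $g_{\alpha\beta\gamma}$ with $c_{\bar\alpha\bar\beta}^{\bar\gamma}$ on $\Face^M$, together with Theorem~\ref{th:LR} describing $\QQ_{\geq 0}\LR(e,f,e+f)$. All three claims (the image of $\pi$, unboundedness of fibers, and the dimensions) will follow from an explicit construction of lifts of points from the LR cone to the Murnaghan face.

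The inclusion $\pi(\Face^M)\subseteq \QQ_{\geq 0}\LR(e,f,e+f)$ is the easy direction: any rational point of $\Face^M$ has a positive integer multiple $(k\alpha,k\beta,k\gamma)$ in $\Kron(e+1,f+1,e+f+1)$ which still satisfies the Murnaghan equality, and Proposition~\ref{prop:Murna}(ii) then gives $c_{k\bar\alpha\,k\bar\beta}^{k\bar\gamma}=g_{k\alpha\,k\beta\,k\gamma}\neq 0$. For the reverse inclusion together with the unbounded-interval claim, I would construct explicit lifts: given $(\bar\alpha,\bar\beta,\bar\gamma)\in\QQ_{\geq 0}\LR(e,f,e+f)$, the identities $|\alpha|=|\beta|=|\gamma|$ and $(n-\alpha_1)+(n-\beta_1)=n-\gamma_1$ force $\alpha_1=\gamma_1+|\bar\beta|$ and $\beta_1=\gamma_1+|\bar\alpha|$ once $\gamma_1$ is fixed. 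For any rational $\gamma_1\geq\max(\bar\alpha_1-|\bar\beta|,\ \bar\beta_1-|\bar\alpha|,\ \bar\gamma_1)$ the resulting triple is a valid one of partitions of the required lengths, projects onto $(\bar\alpha,\bar\beta,\bar\gamma)$, and lies in $\Face^M$ (apply Proposition~\ref{prop:Murna}(ii) backwards after clearing denominators). Letting $\gamma_1\to+\infty$ sweeps out the unbounded interval in the fiber.

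For the dimension count, Theorem~\ref{th:LR} implies that $\QQ_{\geq 0}\LR(e,f,e+f)$ spans its defining hyperplane $\{|\alpha|+|\beta|=|\gamma|\}$, so has dimension $2e+2f-1$; combined with the one-parameter fiber from the preceding step, this gives $\dim\Face^M=2e+2f$. Since $\QQ_{\geq 0}\Kron(e+1,f+1,e+f+1)$ lives in the hyperplane $\{|\alpha|=|\beta|=|\gamma|\}$ of $\QQ^{2e+2f+3}$, which has dimension $2e+2f+1$, and $\Face^M$ is cut out of the cone by the single further equation $|\bar\alpha|+|\bar\beta|=|\bar\gamma|$, to obtain $\dim\QQ_{\geq 0}\Kron(e+1,f+1,e+f+1)=2e+2f+1$ it suffices to exhibit a single point of $\Kron$ where Murnaghan's inequality is strict; for instance $((1,1),(1,1),(2))$, for which $g=1$ follows from $[1,1]\otimes[1,1]=[2]$, satisfies $(n-\alpha_1)+(n-\beta_1)=2>0=n-\gamma_1$. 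The main subtlety of the argument is the reverse inclusion, where one must verify that the partition-ordering and length conditions on the lift are preserved; fortunately this reduces to choosing $\gamma_1$ sufficiently large.
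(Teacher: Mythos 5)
Your proof is correct and follows essentially the same approach as the paper: use Proposition~\ref{prop:Murna}(ii) to identify the face with (a lift of) the LR cone, explicitly construct a one-parameter family of lifts over each point of $\QQ_{\geq 0}\LR(e,f,e+f)$, and combine the resulting dimension bound with the a priori bound $\dim\QQ_{\geq 0}\Kron(e+1,f+1,e+f+1)\leq 2e+2f+1$. The only cosmetic differences are that you parametrize the lift by $\gamma_1$ where the paper uses $\alpha_1$, and you supply the concrete example $((1,1),(1,1),(2))$ to certify strictness of the Murnaghan inequality, a step the paper leaves implicit.
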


\begin{proof}
Assume that equality holds in the formula \eqref{eq:44}.
Assume also that the coordinates of $\alpha$, $\beta$, and $\gamma$
are nonnegative integers.
Then 
\begin{eqnarray}
  \label{eq:8}
  g_{\alpha\,\beta\,\gamma}=c_{\bar\alpha\,\bar\beta}^{\bar \gamma}.
\end{eqnarray}
Thus the face $\Face^{M}$ 
maps by $\pi$ on $\QQ_{\geq 0}\LR(e,f,e+f)$.
Conversely let $(\lambda,\mu,\nu)\in
\LR(e,f,e+f)$. Let $a$ be an integer and set   $b=a+|\lambda|-|\mu|$
and $c=a+|\lambda|-|\nu|$.
 If $a$ is big enough then 
$a\geq \lambda_1$, $b\geq\mu_1$ and  $c\geq \nu_1$.
Therefore $\alpha:=(a,\lambda)$, $\beta=(b,\mu)$ and $\gamma=(c,\nu)$
are three partitions of the same integer such that equality holds in the
inequality~\eqref{eq:44}.
Thus the equality~\eqref{eq:8} holds and $(\alpha,\beta,\gamma)$
belongs to $\Face^M$.
In particular the fiber $\pi^{-1}(\lambda,\mu,\nu)$ contains an
unbounded  segment.

Since $\QQ_{\geq 0}\LR(e,f,e+f)$ has dimension $2e+2f-1$ and the fibers
of $\pi$ have dimension at least one, the cone $\Face^M$ has dimension at least $2e+2f$.
 We had already noticed that  $\QQ_{\geq
  0}\Kron(e+1,f+1,e+f+1)$  has dimension at most $2e+2f+1$.
These two inequalities (and the fact that $\Face^M$ is a strict face of
the cone $\QQ_{\geq  0}\Kron(e+1,f+1,e+f+1)$ ) imply
the lemma.
\end{proof}

\subsection{Proof of Theorem~\ref{th:HornKron}}
\label{sec:proofmain}

Let $r$, $s$, $I$, $J$, and $K$ be like in Theorem~\ref{th:HornKron}.
To such a triple $(I,J,K)$, Theorem~\ref{th:LR} associates a codimension one face
of $\LR(e,f,e+f)$. Using Lemma~\ref{lem:FM}, this face corresponds to a face 
$\Face_{IJK}$ of $\QQ_{\geq 0}\Kron(e+1,f+1,e+f+1)$ of codimension two.
Explicitly, $\Face_{IJK}$ is the set of $(\alpha,\beta,\gamma)\in
\QQ_{\geq 0}\Kron(e+1,f+1,e+f+1)$ such that
\begin{eqnarray}
\left\{
\begin{array}{l}
|\bar\gamma|=
|\bar \alpha| +|\bar\beta|,\\
|\bar\gamma_K|=
|\bar \alpha_I| +|\bar\beta_J|.
\end{array}
\right .
\end{eqnarray}
This face $\Face_{IJK}$ is contained in two codimension one faces, $\Face^M$ and another one $\Face_{IJK}^M$ that we want to determine.

\bigskip
Let $\varphi_\tau$ denote the linear form defined by
$$
\varphi_\tau(\alpha,\beta,\gamma)=\tau(|\bar \alpha|+|\bar \beta|-
|\bar\gamma|)
+(|\bar \alpha_I|+|\bar \beta_J|- |\bar\gamma_K|),
$$
where $\tau$ is any rational number. Set also
$$
\varphi_\infty(\alpha,\beta,\gamma)=|\bar \alpha|+|\bar \beta|-
|\bar\gamma|.
$$
By the theory of convex polyhedral cones,  there exists $\tau_0$ such that for any $\tau>\tau_0$,
$\varphi_\tau$ is nonnegative on  the cone and the associated face is $\Face_{IJK}$, and, 
 $\varphi_{\tau_0}$ corresponds to $\Face^M_{IJK}$.

Here, $E$ and $F$ are two linear spaces of dimension $e+1$ and $f+1$ and $G=\GL(E)\times \GL(F)$.  
Consider the variety
$$
X=\Fl(E)\times\Fl(F)\times\Fl(1,\cdots, e+f+1; E\otimes F).
$$
We identify $\Pic^G(X)$ with $\ZZ^{2e+2f+3}$ like in Section~\ref{sec:krongit}.

\bigskip\noindent
{\bf Geometric description of $\varphi_\infty$.}
The inequality corresponding to $\Face^M$ is $\varphi_\infty\geq 0$. 
By Section~\ref{sec:krongit}, $\Face^M$ generates a face  of $\lr(G,X)$. 
Theorem~\ref{th:gitcone} shows that there exists a well covering pair $(C_\infty,\lambda_\infty)$ of $X$ such that $\varphi_\infty(\alpha,\beta,\gamma)=-\mu^{\Li^\alpha\otimes\Li^\beta\otimes\Li_\gamma}(C_\infty,\lambda_\infty)$.

To describe such a pair $(C_\infty,\lambda_\infty)$, fix  decompositions 
$E=\bar E\oplus l$ and $F=\bar F\oplus m$, where $\bar E$ and $\bar F$ are hyperplanes and $l$ and $m$ are lines. 
Let $\lambda_\infty$ be the one-parameter subgroup of $G$ acting 
with weight $1$ on $\bar E$ and $\bar F$, and with weight $0$ on $l$ and $m$.
Let $C_\infty$ be the set of points in $X$ such that
\begin{itemize}
\item the hyperplanes of the complete flags of $E$ and $F$ are
  respectively $\bar E$ and $\bar F$,
\item the line of the partial flag of $E\otimes F$ is $l\otimes m$,
\item the $(e+f+1)$-dimensional subspace of the partial flag of $E\otimes F$ is 
$(l\otimes m)\oplus (\bar E\otimes m)\oplus(l\otimes \bar F)$.
\end{itemize}
One can check that $(C_\infty,\lambda_\infty)$ works (see \cite{reduction} for details).

\bigskip\noindent
{\bf Geometric description of $\varphi_\tau$.}
Fix  decompositions 
$\bar E=E_+\oplus E_-$ and $\bar F=F_+\oplus F_-$, where $E_+$ and $F_+$ have dimension $r$ and $s$. 
Assume that $\tau>1$ and write $\tau=\frac p q$ with two integers  $p$ and $q$ satisfying $p\wedge q=1$ and $q>0$.
Let $\lambda_\tau$ be the  one parameter subgroup of $G$ acting  with weight $q+p$
on $E_+$ and $F_+$, with weight $p$ on $E_-$ and $F_-$
and with weight $0$ on $l$ and $m$.
The weight spaces of the action of $\lambda_\tau$ on
$E\otimes F$ are
$$
\begin{array}{|l|@{\,}c@{\,}|@{\,}c@{\,}|@{\,}c@{\,}|@{\,}c@{\,}|@{\,}c@{\,}|@{\,}c@{\,}|}
\hline
\mbox{Space}&
 E_+\otimes F_+&E_+\otimes F_-\oplus E_-\otimes F_+
& E_-\otimes F_-&
E_+\oplus F_+&
E_-\oplus  F_-&
l\otimes m\\
\hline
\mbox{Weight}&
2p+2q&2p+q&2p&p+q&p&0\\
\hline
 \end{array}
$$
where some ``$\otimes m$'' and ``$l\otimes$'' have been forgotten. 

To $I^\vee=\{e+1-i\,:\,i\in I\}$ is associated an embedding $\iota_{I^\vee}$ of $\Fl(E_+)\times \Fl(E_-)$
in $\Fl(\bar E)$. Explicitly
$$
\begin{array}{cccl}
\iota_{I^\vee}\,:&\Fl(E_+)\times \Fl(E_-)&\longto&\Fl(\bar E)\\
&((V_i),(W_j))&\longmapsto&(V_{\#I^\vee\cap [1,k]}\oplus W_{k-\#I^\vee\cap [1,k]})_{1\leq k\leq e.}
\end{array}
$$
Similarly we consider $\iota_{J^\vee}$ and $\iota_K$.
Observe that $C_\infty$ is canonically isomorphic to $\Fl(\bar E)\times \Fl(\bar F)\times \Fl(\bar E\oplus\bar F)$.
Consider the embedding $(\iota_{I^\vee},\iota_{J^\vee},\iota_K)$ of 
$$
 \Fl(E_+)\times \Fl(E_-)\times \Fl(F_+)\times \Fl(F_-)\times \Fl(E_+\oplus F_+)\times \Fl(E_-\oplus F_-)
 $$ in $C_\infty$.
Denote by $C_\tau$ its image. 
Using for example \cite[Proposition~1 and Theorem~1]{multi}, one can check
that, for $\tau$ big enough, $(C_\tau,\lambda_\tau)$ is a well covering pair. 
Moreover, 
$\varphi_\tau(\alpha,\beta,\gamma)=-q\mu^{\Li^\alpha\otimes\Li^\beta\otimes\Li_\gamma}
(C_\tau,\lambda_\tau)$.

For any $\tau>1$, $C_\tau$ is an irreducible component of
$\lambda_\tau$. Moreover, $C^+_\tau$ and $P(\lambda_\tau)$ do not depend on $\tau>1$. 
In Particular, $(C_\tau,\lambda_\tau)$ is a well covering pair for any $\tau>1$. 
 
 Theorem~\ref{th:gitcone} shows that the face determined by the inequality
 $\varphi_\tau$ only depends on $C_\tau$, and so does not depend on $\tau>1$: it is $\Face_{IJK}$.
 This implies that $\varphi_1\geq 0$ on $\QQ_{\geq 0}\Kron(e+1,f+1,e+f+1)$.
 Theorem~\ref{th:HornKron} follows.
 \hfill$\square$
 
 \bigskip
 \begin{NB}
 Let $\Face_1$ denote the face associated to $\varphi_1$. 
 Up to now, we have  not proved that $\Face_1$ has codimension 1 or equivalently that $\Face_1=\Face_{IJK}^M$. 
 This is the aim of Section~\ref{sec:codim}.
 \end{NB}
 
 \section{Proof of Theorem~\ref{th:reduction}}
 \label{sec:reduction}

Keeping the  notation of Section~\ref{sec:proofmain},
we  give a  geometric description of $\varphi_1$.
The weight spaces of the action of $\lambda_1$ on $E\otimes F$ are

$$
\begin{array}{|@{\,}c@{\,}|@{\,}c@{\,}|@{\,}c@{\,}|@{\,}c@{\,}|@{\,}c@{\,}|}
\hline
 E_+\otimes F_+&E_+\otimes F_-\oplus E_-\otimes F_+
& E_-\otimes F_-\oplus
E_+\oplus F_+&
E_-\oplus  F_-&
l\otimes m\\
\hline
4&3&2&1&0\\
\hline
 \end{array}
$$
The irreducible component $C_1$ of $X^{\lambda_1}$ containing $C_\tau$
(for $\tau>1$) is isomorphic to
$$
\begin{array}{ll}
& \Fl(E_+)\times \Fl(E_-)\times \Fl(F_+)\times \Fl(F_-)\\
 \times &\Fl(1,\dots,r+s;
 E_-\otimes F_-\oplus E_+\oplus F_+)\times \Fl(E_-\oplus F_-).
 \end{array}
$$
 Moreover, $C_1^+=C_\tau^+$ and $P(\lambda_1)=P(\lambda_\tau)$. In particular the pair $(C_1,\lambda_1)$ is well covering.\\
 
 Let $G^{\lambda_1}$ denote the centralizer of $\lambda_1$ in $G$.
Note that
$G^{\lambda_1}=\GL(E_+)\times \GL(E_-)\times \CC^*\times \GL(F_+)\times \GL(F_-)\times\CC^*$.
 By \cite[Theorem~2]{reduction}, $g_{\alpha\beta\gamma}$ is the dimension of
 $$
 \Ho^0(C_1,(\Li^\alpha\otimes\Li^\beta\otimes\Li_\gamma)_{|C_1})^{G^{\lambda_1}}.
 $$
We have to determine the restriction  $(\Li^\alpha\otimes\Li^\beta\otimes\Li_\gamma)_{|C_1}$ via the identification of $C_1$ with a product of flag
varieties. 
Fix a basis of $\bar E$ starting with a basis of $E_+$ followed by a basis of $E_-$.
For the group $\GL(\bar E)$ we consider standard maximal tori and Borel subgroups in this basis.
Similarly, we choose subgroups of $\GL(\bar F)$.
 
The maximal torus of $\GL(E)$ acts on the fiber in $\Li^\alpha$ over the 
base point of $\Fl(E)$ with weight  $(\alpha_{e+1},\dots,\alpha_1)$. 
The maximal torus $\bar T$ of $\GL(\bar E)$ acts on the fiber in $\Li^\alpha$ over the 
base point of $\Fl(\bar E)$ (embedded in $\Fl(E)$ like $C_\infty$ is embedded in $X$) 
with weight  $(\alpha_{e+1},\dots,\alpha_2)$. 
Let $w_{I^\vee}$ in the symmetric  group $S_e$ associated to
$I^{\vee}$ ($w_{I^\vee}(k)$ is the $k^{\rm th}$ elements of $I^\vee$
and $w_{I^\vee}(r+k)$ is the $k^{\rm th}$ elements of $I^\vee_-$).
Then  $\iota_{I^\vee}$ maps the base point of $\Fl(E_+)\times\Fl(E_-)$ to
 the image by $w_{I^\vee}^{-1}$
of the base point of $\Fl(\bar E)$. It follows that $\bar T$ acts on the fiber in $\iota_{I^\vee}^*(\Li^\alpha_{|C_\infty})$ by the weight $w_{I^\vee}^{-1}(\alpha_{e+1},\dots,\alpha_2)$.
After computation this gives 
$$
 \Ho^0(C_1,(\Li^\alpha\otimes\Li^0\otimes\Li_0)_{|C_1})=S^{\bar \alpha_{I_+}}E_+\otimes
S^{\bar \alpha_{I_-}}E_- .
 $$
 Similarly
 $$
 \Ho^0(C_1,(\Li^0\otimes\Li^\beta\otimes\Li_0)_{|C_1})=S^{\bar \beta_{J_+}}F_+\otimes
S^{\bar \beta_{J_-}}F_- ,
 $$
 and
 $$
 \Ho^0(C_1,(\Li^0\otimes\Li^0\otimes\Li_\gamma)_{|C_1})=S^{\bar \gamma_{K_+}}(E_-\otimes F_-\oplus E_+\oplus F_+)^*\otimes
S^{\bar \gamma_{K_-}}(E_-\oplus F_-)^*.
 $$
 We deduce that $g_{\alpha\beta\gamma}$ is the multiplicity of 
 the $\GL(E_+)\times \GL(E_-)\times \GL(F_+)\times \GL(F_-)$-simple module 
 $$
 S^{\bar \alpha_{I_+}}E_+\otimes
S^{\bar \alpha_{I_-}}E_-\otimes S^{\bar \beta_{J_+}}F_+\otimes
S^{\bar \beta_{J_-}}F_-
$$
 in the  module
\begin{eqnarray}
\label{eq:bigmod}
S^{\bar \gamma_{K_+}}(E_-\otimes F_-\oplus E_+\oplus F_+)\otimes
S^{\bar \gamma_{K_-}}(E_-\oplus F_-).
\end{eqnarray}

Now the theorem is obtained by using repeatedly the formulas~\eqref{eq:defLR}, \eqref{eq:KronGL} and
\eqref{eq:LRCoefLevi} to decompose the  module~\eqref{eq:bigmod}.

\section{Proof of Theorem~\ref{th:codimoneface}}
\label{sec:codim}

Recall that the aim is to prove that $\Face_1$ has codimension one. Since $\Face_{IJK}$ has codimension two and it is contained in $\Face_1$, it remains to prove that 
 $\Face_{IJK}\neq\Face_1$.\\

Assume now that   $(\alpha,\beta,\gamma)$ belongs $\Face_{IJK}$. Since $\Face_{IJK}$ is contained in $\Face^M$, 
 $g_{\alpha\beta\gamma}=c_{\bar \alpha\bar \beta}^{\bar \gamma}$. 
 Then (see {\it eg} \cite[Theorem 7.4]{DW:comb}) $g_{\alpha\beta\gamma}=c_{\bar \alpha_I\bar \beta_J}^{\bar \gamma_K}.c_{\bar \alpha_{I_-}\bar \beta_{J_-}}^{\bar \gamma_{K_-}}.$
 In particular, Lemma~\ref{lem:degLR} shows that   
 $g_{n\alpha\,n\beta\,n\gamma}$ is a polynomial function of $n$
 of degree at most
 \begin{equation}
\label{eq:dmax}
\begin{array}{l@{\,}l}
 d_{max}=&
\binom{r} +\binom s +\binom{r+s}+\binom{a} +\binom b +\binom{a+b}\\[1.1em]
%\frac{e(e-1)+f(f-1)+(e+f)(e+f-1)}{2}
&-r^2-s^2-a^2-b^2+2,
\end{array}
\end{equation}
 where $a=e-r$ and $b=f-s$.\\
 
 Given an algebraic group $\Gamma$ acting on an irreducible variety $Y$, we denote by
 $\mode(\Gamma,Y)$ the minimal codimension of the $\Gamma$-orbits.
 By \cite[Lemma~2]{algo}, for any $\Li$ in the relative interior of $\QQ_{\geq 0}\LR(G^{\lambda_1},C_1)$,
 the dimension of $C_1^{\rm ss}(\Li)\quot G^{\lambda_1}$ is equal to $\mode(G^{\lambda_1},C_1)$.

 By \cite[Theorem~4]{GITEigen}, there exists a line bundle $\Mi$ on $X$ such that $\Mi_{|C_1}$ belongs to the relative interior of $\QQ_{\geq 0}\LR(G^{\lambda_1},C_1)$. 
 We may assume that $\Mi=\Li^\alpha\otimes\Li^\beta\otimes\Li_\gamma$ for three partitions
 $\alpha$, $\beta$ and $\gamma$.
 But, by \cite[Theorem~8]{GITEigen}, $X^{\rm ss}(\Mi)\quot G\simeq C_1^{\rm ss}(\Mi_{|C_1})\quot G^{\lambda_1}$.
 It follows that $\Mi$ is a point on $\Face_1$ satisfying $\dim(X^{\rm ss}(\Mi)\quot G)=\mode(G^{\lambda_1},C_1)$.
  In particular, 
  
 \begin{equation}\label{eq:bigO}
   \begin{array}{ll}
 n\mapsto g_{n\alpha\,n\beta\,n\gamma}&\mbox{ cannot be a polynomial function}\\
 &\mbox{ of degree less than } \mode(G^{\lambda_1},C_1).
   \end{array}
 \end{equation}
 
 Regarding the assertions~\eqref{eq:dmax} and \eqref{eq:bigO}, to prove that $\Face_{IJK}\neq\Face_1$ it is sufficient to prove the claim: $\mode(G^{\lambda_1},C_1)>d_{max}$. 
The center of $G^{\lambda_1}$ contains a dimension 3 torus acting trivially on $C_1$.
Hence 
$$\mode(G^{\lambda_1},C_1)\geq \dim(C_1)-\dim(G^{\lambda_1})+3.$$
After simplification, we get 
$$
\mode(G^{\lambda_1},C_1)-d_{max}\geq ab(r + s)-1.
$$ 
 Since $a,b,r,$ and $s$ are positive integers, the claim follows.\\
 
 \section{Proof of Theorem~\ref{th:WeylKron}}
\label{sec:lestineq}

We keep notation of Section~\ref{sec:proofmain}, but now $r=e$ and 
$I=\{1,\dots,e\}$.
In particular $E_-$ is trivial and the weight spaces of the action of $\lambda_\tau$ of $E\otimes F$ are
$$
\renewcommand{\arraystretch}{1.2}
\begin{array}{|@{\,}c@{\,}|@{\,}c@{\,}|@{\,}c@{\,}|@{\,}c@{\,}|@{\,}c@{\,}|}
\hline
 \bar E\otimes F_+&\bar E \otimes F_-
&
\bar E\oplus F_+&
 F_-&
l\otimes m\\
\hline
2p+2q&2p+q&p+q&p&0\\
\hline
 \end{array}
$$

Hence $C_\tau\simeq\Fl(\bar E)\times\Fl(F_+)\times\Fl(\bar E\oplus F_+)$ for $\tau$ big enough.
Then, for any $\tau>0$, $(C_\tau,\lambda_\tau)$ is a well covering pair.  We conclude like in Section~\ref{sec:proofmain} that $\varphi_0$ is nonnegative on the Kronecker cone. This proves the first assertion of the theorem.\\

Consider now the limit case $\tau=0$.
The weight spaces are 
 $$
\renewcommand{\arraystretch}{1.2}
\begin{array}{|@{\,}c@{\,}|@{\,}c@{\,}|@{\,}c@{\,}|}
\hline
 \bar E\otimes F_+&
 \bar E \otimes (F_-
\oplus m)\oplus F_+
 &l\otimes(F_-
\oplus  m)\\
\hline
2&1&0\\
\hline
 \end{array}
$$
Hence $C_0\simeq\Fl(\bar E)\times\Fl(F_+)\times \PP(F_-\oplus  m)\times\Fl(1,\dots,e+f-1;\bar E\otimes (F_-\oplus m)\oplus F_+)\times \PP(F_-\oplus  m)$.
Moreover $G^{\lambda_0}\simeq \GL(\bar E)\times\CC^*\times \GL(F_+)\times \GL(F_-\oplus m)$.
By \cite[Theorem~2]{reduction}
$$
g_{\alpha\beta\gamma}=\dim(\Ho^0(C_0,(\Li^\alpha\otimes\Li^\beta\otimes\Li_\gamma)_{|C_0})^{G^{\lambda_0}}).
$$
The computation of this dimension is made using 
 the formulas~\eqref{eq:defLR}, \eqref{eq:KronGL} and
\eqref{eq:LRCoefLevi} like in Section~\ref{sec:reduction}.

\section{A final inequality}

All but two of the inequalities of Theorem~\ref{th:LR} had been extended to the Kronecker coefficients  by Theorems~\ref{th:WeylKron} and \ref{th:HornKron}.
The two exceptions are $\alpha_i\leq \gamma_i$ and $\beta_j\leq\gamma_j$.
Consider the second one, up to permuting $(\alpha,e)$ and $(\beta,f)$.
The extended inequality is
$$
\alpha_1+\beta_1-\beta_j-n\leq
\gamma_1-\gamma_j,
$$
for any $f+1\geq j\geq 2$.

This inequality is satisfied if $g_{\alpha\beta\gamma}\neq 0$. The proof is obtained by considering
$I=\emptyset$ and  $J=K=\{j-1\}$ in Section~\ref{sec:proofmain}.

\bibliographystyle{amsalpha}
\bibliography{hornkron}

\providecommand{\bysame}{\leavevmode\hbox to3em{\hrulefill}\thinspace}
\providecommand{\MR}{\relax\ifhmode\unskip\space\fi MR }
% \MRhref is called by the amsart/book/proc definition of \MR.
\providecommand{\MRhref}[2]{%
  \href{http://www.ams.org/mathscinet-getitem?mr=#1}{#2}
}
\providecommand{\href}[2]{#2}
\begin{thebibliography}{Res11b}

\bibitem[DW02]{DW:LRpol}
Harm Derksen and Jerzy Weyman, \emph{On the {L}ittlewood-{R}ichardson
  polynomials}, J. Algebra \textbf{255} (2002), no.~2, 247--257.

\bibitem[DW11]{DW:comb}
Harm Derksen and Jerzy Weyman, \emph{The combinatorics of quiver
  representations}, Ann. Inst. Fourier \textbf{61} (2011), no.~3, 1061--1131.

\bibitem[\'E92]{elash}
Alexander~G. \'Elashvili, \emph{Invariant algebras}, Lie groups, their discrete
  subgroups, and invariant theory, Adv. Soviet Math., vol.~8, Amer. Math. Soc.,
  Providence, RI, 1992, pp.~57--64.

\bibitem[FH91]{FH}
William Fulton and Joe Harris, \emph{Representation theory}, Springer-Verlag,
  New York, 1991, A first course, Readings in Mathematics.

\bibitem[Ful00]{Fulton:survey}
William Fulton, \emph{Eigenvalues, invariant factors, highest weights, and
  {S}chubert calculus}, Bull. Amer. Math. Soc. (N.S.) \textbf{37} (2000),
  no.~3, 209--249.

\bibitem[JK81]{JaKe}
Gordon James and Adalbert Kerber, \emph{The representation theory of the
  symmetric group}, Encyclopedia of Mathematics and its Applications, vol.~16,
  Addison-Wesley Publishing Co., Reading, Mass., 1981, With a foreword by P. M.
  Cohn, With an introduction by Gilbert de B. Robinson.

\bibitem[KT99]{KT:saturation}
Allen Knutson and Terence Tao, \emph{The honeycomb model of {${\rm GL}\sb
  n({\Bbb C})$} tensor products. {I}. {P}roof of the saturation conjecture}, J.
  Amer. Math. Soc. \textbf{12} (1999), no.~4, 1055--1090.

\bibitem[Mac95]{Macdo}
Ian~Grant Macdonald, \emph{Symmetric functions and {H}all polynomials}, second
  ed., Oxford Mathematical Monographs, The Clarendon Press Oxford University
  Press, New York, 1995, With contributions by A. Zelevinsky, Oxford Science
  Publications.

\bibitem[Man01]{Manivel}
Laurent Manivel, \emph{Symmetric functions, {S}chubert polynomials and
  degeneracy loci}, SMF/AMS Texts and Monographs, vol.~6, American Mathematical
  Society, Providence, RI, 2001, Translated from the 1998 French original by
  John R. Swallow, Cours Sp{{\'e}}cialis{{\'e}}s [Specialized Courses], 3.

\bibitem[Res10]{GITEigen}
Nicolas Ressayre, \emph{Geometric invariant theory and generalized eigenvalue
  problem}, Invent. Math. \textbf{180} (2010), 389--441.

\bibitem[Res11a]{algo}
\bysame, \emph{A cohomology free description of eigencones in type {A}, {B} and
  {C}}, Internat. Math. Res. Notices (to appear) (2011), 1--35.

\bibitem[Res11b]{multi}
\bysame, \emph{Multiplicative formulas in {S}chubert calculus and quiver
  representation}, Indag. Math. (N.S.) \textbf{22} (2011), no.~1-2, 87--102.

\bibitem[Res11c]{reduction}
Nicolas Ressayre, \emph{Reductions for branching coefficients}, ArXiV e-prints
  \textbf{1102.0196} (2011), 1--13.

\bibitem[RR11]{RR}
Nicolas Ressayre and Edward Richmond, \emph{Branching {S}chubert calculus and
  the {B}elkale-{K}umar product on cohomology}, Proc. Amer. Math. Soc.
  \textbf{139} (2011), 835--848.

\bibitem[Wey12]{Weyl:ineq}
Hermann Weyl, \emph{Das asymptotische {V}erteilungsgesetz der {E}igenwerte
  linearer partieller {D}ifferentialgleichungen (mit einer {A}nwendung auf die
  {T}heorie der {H}ohlraumstrahlung)}, Math. Ann. \textbf{71} (1912), no.~4,
  441--479.

\end{thebibliography}

\begin{center}
  -\hspace{1em}$\diamondsuit$\hspace{1em}-
\end{center}

\end{document}